\numberwithin{equation}{section}
\theoremstyle{plain} 
\newtheorem{theorem}{\bf Theorem}[section]
\newtheorem{lemma}[theorem]{\bf Lemma}
\newtheorem{corollary}[theorem]{\bf Corollary}
\newtheorem{proposition}[theorem]{\bf Proposition}
\theoremstyle{definition} 
\newtheorem{definition}[theorem]{\bf Definition}
\newtheorem{condition}[theorem]{\bf Condition}
\newtheorem{remark}[theorem]{\bf Remark}
\newtheorem{example}[theorem]{\bf Example}
\newcommand{\bt}{\begin{theorem}}
\newcommand{\et}{\end{theorem}}
\newcommand{\bl}{\begin{lemma}}
\newcommand{\el}{\end{lemma}}
\newcommand{\bd}{\begin{definition}}
\newcommand{\ed}{\end{definition}}
\newcommand{\bc}{\begin{corollary}}
\newcommand{\ec}{\end{corollary}}
\newcommand{\bp}{\begin{proof}}
\newcommand{\ep}{\end{proof}}
\newcommand{\bx}{\begin{example}}
\newcommand{\ex}{\end{example}}
\newcommand{\br}{\begin{remark}}
\newcommand{\er}{\end{remark}}
\newcommand{\be}{\begin{equation}}
\newcommand{\ee}{\end{equation}}
\newcommand{\ba}{\begin{align}}
\newcommand{\ea}{\end{align}}
\newcommand{\bn}{\begin{enumerate}}
\newcommand{\en}{\end{enumerate}}
\newcommand{\bcs}{\begin{cases}}
\newcommand{\ecs}{\end{cases}}
\renewcommand{\section}{\@startsection{section}{1}{0mm}
  {-\baselineskip}{0.5\baselineskip}{\bf\leftline}}
\begin{document}

\title[Wakamatsu-tilting subcategories]{Wakamatsu-tilting subcategories in extriangulated categories} 

\author[Z. Zhu, J. Wei]{Zhiwei Zhu, Jiaqun Wei$^\ast$}

\address{Institute of Mathematics, School of Mathematical Sciences, Nanjing Normal University, Nanjing, 210023, P.~R.~China}
\email{1985933219@qq.com (Zhu)}

\address{Department of Mathematics, Zhejiang Normal University, Jinhua, 321004, Zhejiang, P.~R.~China}
\email{weijiaqun5479@zjnu.edu.cn (Wei)}


\keywords{Wakamatsu-tilting subcategories; Recollement; extriangulated categories.}
\thanks{$*$~Corresponding author.}


\begin{abstract}
Let $\mathscr{C}$ be an extriangulated category with enough projectives and injectives. We give the definitions of Wakamatsu-tilting subcategories and Wakamatsu-cotilting subcategories of $\mathscr{C}$ and show that they coincide with each other. Moreover, the definitions of $\infty$-tilting subcategories and $\infty$-cotilting subcategories given by Zhang, Wei and Wang also coincide with them. As a result, Wakamatsu-tilting subcategories success all properties of $\infty$-tilting subcategories and $\infty$-cotilting subcategories. On the other hand, we glue the Wakamatsu-tilting subcategories in a special recollement and show that the converse of the gluing holds under certain conditions.
\end{abstract}

\maketitle

\section{Introduction}
Tilting theory, as a generation of Morita equivalences, plays an important role in the representation theory of algebra. It originated with the study of reflection functors in \cite{2,1}. Then Brenner and Butler described the first set of axioms for a titling module in \cite{3}. A further generalization of tilting modules to modules of possibly infinite projective dimension was made by Wakamatsu in \cite{9}. Such a generalized tilting module is now known as Wakamatsu tilting module. The present paper is devoted to studying various aspects of these modules in \cite{10,11}. Nowadays, tilting theory has been generalized in many directions. Krause \cite{5} defined tilting objects in exact categories and Sauter \cite{6} defined tilting subcategories in exact categories. Wakamatsu tilting subcategory is a certain categorical analogue of Wakamatsu tilting module made in \cite{12}. Recently, Zhu and Zhuang \cite{7} generalized the definition of tilting subcategories in extriangulated categories. In section 3, we will introduce Wakamatsu-tilting subcategories in extriangulated categories and obtain some results.

Extriangulated categories were introduced by Nakaoka and Palu \cite{8}, which share some properties of triangulated categories and exact categories. There are some examples of extriangulated categories such as exact categories and extension-closed subcategories of triangulated categories, while some extriangulated categories may be neither exact nor triangulated categories. However, some results of triangulated categories and exact categories can be generalized to extriangulated categories.

The concept of a recollement in triangulated categories was first introduced by Beilinson, Bernstein and Deligne in \cite{13}. Recollements in abelian categories appeared in the construction of perverse sheaves by MacPherson and Vilonen \cite{14}. Recently, Wang, Wei, and Zhang introduced the concept of recollements of extriangulated categories in \cite{16}, which provides a unified generalization of recollements in both abelian and triangulated categories. Furthermore, researchers have delved into gluing techniques with respect to recollements of extriangulated categories. In section 4, we will glue together Wakamatsu-tilting subcategories in extriangulated categories.

The paper is organized as follows. In Section 2, we summarize some definitions and results of extriangulated categories. In section 3, we introduce Wakamatsu-tilting subcategories and Wakamatsu-cotilting subcategories in extriangulated categories. Then we will show that these two definitions and the definitions of $\infty$-tilting subcategories and $\infty$-cotilting subcategories in \cite{15} are equivalent. In section 4, we will glue together Wakamatsu-tilting subcategories in a special recollement of extriangulated categories and show that the converse of the gluing holds under certain conditions.

\section{Preliminaries}
Throughout the article, we assume, unless otherwise stated, that $\mathscr{C}$ denotes an additive category, which is skeletally small and Krull-Schmidt. All subcategories
considered are full and closed under isomorphisms. We denote by $\mathscr{C}(A,B)$ the set of morphisms from $A$ to $B$ in $\mathscr{C}$. The composition of $a\in\mathscr{C}(A,B)$ and $b\in\mathscr{C}(B,C)$ is denoted by $ba$. For a subcategory $\mathscr{A}$ of $\mathscr{C}$, $a\in\mathscr{C}(A,C)$ is a {\em right} $\mathscr{A}$-{\em approximation} for $C\in\mathscr{C}$ if $A\in\mathscr{A}$ and $\mathscr{C}(A',a)$ is surjective for any $A'\in\mathscr{A}$. Dually, we can define {\em left $\mathscr{A}$-approximation}.

\subsection{Extriangulated categories}
Let us recall some notions concerning extriangulated categories from \cite{8}.

Let $\mathbb{E}:\mathscr{C}^{op}\times\mathscr{C}\rightarrow Ab$ be a biadditive functor, where $Ab$ is the category of abelian groups. For any pair of objects $A,C\in\mathscr{C}$, an element $\delta\in\mathbb{E}(C,A)$ is called an
$\mathbb{E}$-{\em extension}. The zero element $0\in\mathbb{E}(C,A)$ is called the {\em split} $\mathbb{E}$-{\em extension}. For any morphism $a\in\mathscr{C}(A,A')$ and $c\in\mathscr{C}(C',C)$, we have the following $\mathbb{E}$-extensions
$$\mathbb{E}(C,a)(\delta)\in\mathbb{E}(C,A'), \,\mathbb{E}(c,A)(\delta)\in\mathbb{E}(C',A),$$
which are denoted by $a_*\delta$ and $c^*\delta$, respectively.
\begin{definition}\label{morphism}
   \cite[Definition 2.3]{8} A morphism $(a,c):\delta\rightarrow \delta'$ of $\mathbb{E}$-extensions $\delta\in\mathbb{E}(C,A)$, $\delta'\in\mathbb{E}(C',A')$ is a pair of morphisms $a\in\mathscr{C}(A,A')$ and $c\in\mathscr{C}(C,C')$ satisfying  $a_*\delta=c^*\delta'$.
\end{definition}
Two sequences of morphisms $A\stackrel{x}{\longrightarrow}B\stackrel{y}{\longrightarrow}C$ and $A\stackrel{x'}{\longrightarrow}B'\stackrel{y'}{\longrightarrow}C$ in $\mathscr{C}$ are said to be {\em equivalent} if there exists an isomorphism $b\in\mathscr{C}(B,B')$ such that the following diagram is commutative.
$$\xymatrix{A \ar[r]^x \ar@{=}[d]& B\ar[r]^y \ar[d]^b_{\simeq}&C\ar@{=}[d]\\
A\ar[r]^{x'}&B'\ar[r]^{y'}&C}$$
We denote the equivalence class of $A\stackrel{x}{\longrightarrow}B\stackrel{y}{\longrightarrow}C$ by $[A\stackrel{x}{\longrightarrow}B\stackrel{y}{\longrightarrow}C]$. In addition, for any $A,C\in\mathscr{C}$, we denote as
$$0=[\xymatrix{A\ar[r]^{\tiny{\setlength{\arraycolsep}{1.2pt}\begin{pmatrix}1\\0\end{pmatrix}}\quad\quad}&A\oplus B\ar[r]^{\quad\tiny{\setlength{\arraycolsep}{1.2pt}\begin{pmatrix}0 & 1\end{pmatrix}}}&C}].
$$
For any two equivalence classes $[A\stackrel{x}{\longrightarrow}B\stackrel{y}{\longrightarrow}C]$ and $[A'\stackrel{x'}{\longrightarrow}B'\stackrel{y'}{\longrightarrow}C']$ we denote as
$$[A\stackrel{x}{\longrightarrow}B\stackrel{y}{\longrightarrow}C]\oplus [A'\stackrel{x'}{\longrightarrow}B'\stackrel{y'}{\longrightarrow}C'] = [A\oplus A'\stackrel{\tiny{\setlength{\arraycolsep}{1.2pt}\begin{pmatrix}
                    x & 0 \\
                    0 & x'
                  \end{pmatrix}}}{\longrightarrow}B\oplus B'\stackrel{\tiny{\setlength{\arraycolsep}{1.2pt}\begin{pmatrix}
                                      y & 0 \\
                                      0 & y'
                                    \end{pmatrix}}}{\longrightarrow}C\oplus C'].$$
\begin{definition}
  \cite[Definition 2.9]{8} Let $\mathfrak{s}$ be a correspondence, which associates an equivalence class $\mathfrak{s}(\delta)=[A\stackrel{x}{\longrightarrow}B\stackrel{y}{\longrightarrow}C]$ to each $\mathbb{E}$-extension $\delta\in\mathbb{E}(C,A)$. This $\mathfrak{s}$ is called a {\em realization} of $\mathbb{E}$ if for any morphism $(a,c):\delta\rightarrow \delta'$ with $\mathfrak{s}(\delta)=[A\stackrel{x}{\longrightarrow}B\stackrel{y}{\longrightarrow}C]$ and $\mathfrak{s}(\delta')=[A'\stackrel{x'}{\longrightarrow}B'\stackrel{y'}{\longrightarrow}C']$, there is a commutative diagram as follows:
  $$\xymatrix{A \ar[r]^x \ar[d]^a& B\ar[r]^y \ar[d]^b&C\ar[d]^c\\
  A'\ar[r]^{x'}&B'\ar[r]^{y'}&C'}
  $$
  A realization $\mathfrak{s}$ of $\mathbb{E}$ is said to be {\em additive} if the following conditions are satisfied:

  \;(a) For any $A,C\in\mathscr{C}$, the split $\mathbb{E}$-extension $0\in\mathbb{E}(C,A)$ satisfies $\mathfrak{s}(0)=0$.

  \;(b) $\mathfrak{s}(\delta\oplus \delta')=\mathfrak{s}(\delta)\oplus\mathfrak{s}(\delta')$ for any pair of $\mathbb{E}$-extensions $\delta$ and $\delta'$.
\end{definition}

\begin{definition}
  \cite[Definition 2.12]{8} We call the triple $\mathscr{C}=(\mathscr{C},\mathbb{E},\mathfrak{s})$ an extriangulated category if it satisfies the following conditions:

  ${\rm(ET1)}$ $\mathbb{E}:\mathscr{C}^{op}\times\mathscr{C}\rightarrow Ab$ is a biadditive functor.

  ${\rm(ET2)}$ $\mathfrak{s}$ is an additive realization of $\mathbb{E}$.

  ${\rm(ET3)}$ Let $\delta\in\mathbb{E}(C,A)$ and $\delta'\in\mathbb{E}(C',A')$ be any pair of $\mathbb{E}$-extensions, with
  $$\mathfrak{s}(\delta)=[A\stackrel{x}{\longrightarrow}B\stackrel{y}{\longrightarrow}C] \;{\rm and}\;
  \mathfrak{s}(\delta')=[A'\stackrel{x'}{\longrightarrow}B'\stackrel{y'}{\longrightarrow}C'].$$

  \quad\quad\quad For any commutative diagram
  $$\xymatrix{A \ar[r]^x \ar[d]^a& B\ar[r]^y \ar[d]^b&C\\
  A'\ar[r]^{x'}&B'\ar[r]^{y'}&C'}
  $$

  \quad\quad\quad in $\mathscr{C}$, there is a morphism $(a,c):\delta\rightarrow \delta'$ satisfying $cy=y'b$.

  ${\rm(ET3)^{op}}$ Dual of ${\rm(ET3)}$

  ${\rm(ET4)}$ Let $\delta\in\mathbb{E}(D,A)$ and $\delta'\in\mathbb{E}(F,B)$ be $\mathbb{E}$-extensions realized by $A\stackrel{f}{\longrightarrow}B\stackrel{f'}{\longrightarrow}D$

  \quad\quad\quad and $B\stackrel{g}{\longrightarrow}C\stackrel{g'}{\longrightarrow}F$, respectively. Then there exist an object $E\in\mathscr{C}$, a

  \quad\quad\quad commutative diagram
  $$\xymatrix{A \ar[r]^f \ar@{=}[d]& B\ar[r]^{f'}\ar[d]^g&D\ar[d]^d\\
  A\ar[r]^{h}&C\ar[r]^{h'}\ar[d]^{g'}&E\ar[d]^e\\&F\ar@{=}[r]&F}
  $$

  \quad\quad\quad in $\mathscr{C}$, and an $\mathbb{E}$-extension $\delta''\in\mathbb{E}(E,A)$ realized by $A\stackrel{h}{\longrightarrow}C\stackrel{h'}{\longrightarrow}E$, which

 \quad\quad\quad satisfy the following compatibilities:

  \quad\quad\quad (i) $D\stackrel{d}{\longrightarrow}E\stackrel{e}{\longrightarrow}F$ realizes $\mathbb{E}(F,f')(\delta')$,

  \quad\quad\quad (ii) $\mathbb{E}(d,A)(\delta'')=\delta$,

  \quad\quad\quad (iii) $\mathbb{E}(E,f)(\delta'')=\mathbb{E}(e,B)(\delta')$.

  ${\rm(ET4)^{op}}$ Dual of ${\rm(ET4)}$
\end{definition}

\begin{remark}
  (a) A sequence $A\stackrel{x}{\longrightarrow}B\stackrel{y}{\longrightarrow}C$ is called a {\em conflation} if it realizes some $\mathbb{E}$-extension $\delta\in\mathbb{E}(C,A)$. Then $x$ is called an {\em inflation} and $y$ is called a {\em deflation}. We say $A\stackrel{x}{\longrightarrow}B\stackrel{y}{\longrightarrow}C\stackrel{\delta}{\dashrightarrow}$ is an $\mathbb{E}$-{\em triangle}. An $\mathbb{E}$-triangle is {\em split} if it realizes $0$.

  (b) For a given $\mathbb{E}$-triangle $A\stackrel{x}{\longrightarrow}B\stackrel{y}{\longrightarrow}C\stackrel{\delta}{\dashrightarrow}$, we denote $A={\rm cocone}(y)$ and $C={\rm cone}(x)$. A subcategory $\mathscr{X}$ of $\mathscr{C}$ is {\em closed under cocones} (resp. {\em cones}) if for any conflation $A\stackrel{x}{\longrightarrow}B\stackrel{y}{\longrightarrow}C$ with $B,C\in\mathscr{X}$ (resp. $A,B\in\mathscr{X}$), we have $A\in\mathscr{X}$ (resp. $C\in\mathscr{X}$).

  (c) A subcategory $\mathscr{X}$ of $\mathscr{C}$ is {\em closed under extensions} if for any conflation $A\stackrel{x}{\longrightarrow}B\stackrel{y}{\longrightarrow}C$ with $A,C\in\mathscr{X}$, we have $B\in\mathscr{X}$.

  (d) An object $P$ in $\mathscr{C}$ is {\em projective} if for any conflation $A\stackrel{x}{\longrightarrow}B\stackrel{y}{\longrightarrow}C$, $\mathscr{C}(P,y)$ is surjective. We denote the subcategory of projective objects by $\mathcal{P}(\mathscr{C})$. Dually, we can define {\em injective} objects and the subcategory of injective objects denoted by $\mathcal{I}(\mathscr{C})$. We say that $\mathscr{C}$ {\em has enough projectives} if for any $A\in\mathscr{C}$, there is a deflation $P\rightarrow A$ for some $P\in\mathcal{P}(\mathscr{C})$. Dually, we define that $\mathscr{C}$ {\em has enough injectives}.

  (e) Let $\mathscr{T}$ be a subcategory of $\mathscr{C}$. Then we define ${\rm Defl}(\mathscr{T})$ and ${\rm Infl}(\mathscr{T})$ as $${\rm Defl}(\mathscr{T})=\{F\in\mathscr{C}\;|\;\exists\;{\rm a}\;{\rm deflation}\; T\stackrel{g}{\longrightarrow}F\;{\rm for}\;{\rm some}\;T\in\mathscr{T}\},$$ $${\rm Infl}(\mathscr{T})=\{S\in\mathscr{C}\;|\;\exists\;{\rm an}\;{\rm inflation}\; S\stackrel{f}{\longrightarrow}T\;{\rm for}\;{\rm some}\;T\in\mathscr{T}\}.$$
\end{remark}

\begin{definition}
  A subcategory $\mathscr{X}\in\mathscr{C}$ is called {\em resolving} if it satisfies the following conditons:

  (1) $\mathcal{P}(\mathscr{C})\subseteq \mathscr{X}$,

  (2) $\mathscr{X}$ is closed under direct summands, cocones and extensions.
\end{definition}

Dually, we have the definition of {\em coresolving~subcategory}.

\subsection{Exact sequences in extriangulated categories}
Throughout the article, we assume $\mathscr{C}$ is an extriangulated category, which has enough projectives and injectives and satisfies the following conditions.

\begin{condition}\label{2.6}
  (WIC). \cite[Condition 5.8]{8}
  (1) For any pair of morphisms $f:X\rightarrow Y$ and $g:Y\rightarrow Z$ in $\mathscr{C}$, if $gf$ is an inflation, then so is $f$.

  (2) For any pair of morphisms $f:X\rightarrow Y$ and $g:Y\rightarrow Z$ in $\mathscr{C}$, if $gf$ is a deflation, then so is $g$.
\end{condition}

\begin{definition}
  \cite[Definition 2.9]{16} A sequence $A\stackrel{x}{\longrightarrow}B\stackrel{y}{\longrightarrow}C$ is said to be {\em right exact $\mathbb{E}$-triangle} if there exists an $\mathbb{E}$-triangle $K\stackrel{h_2}{\longrightarrow}B\stackrel{y}{\longrightarrow}C\stackrel{\delta}{\dashrightarrow}$ and a deflation $h_1:A\rightarrow K$ which is compatible, such that $x=h_2h_1$. Dually, we can define {\em left exact $\mathbb{E}$-triangles}.
\end{definition}

A morphism $f$ in $\mathscr{C}$ is called {\em compatible}, if ``$f$ is both an inflation and a deflation'' implies $f$ is an isomorphism.

\begin{lemma}\label{2.8}
  \cite[Lemma 2.10]{16} Let $\eta:A\stackrel{x}{\longrightarrow}B\stackrel{y}{\longrightarrow}C$ be a right exact $\mathbb{E}$-triangle in $\mathscr{C}$. If x is an inflation, then $\eta$ is a conflation. Dually, we have the similar result on left exact $\mathbb{E}$-triangles.
\end{lemma}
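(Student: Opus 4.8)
The claim is: given a right exact $\mathbb{E}$-triangle $\eta: A\xrightarrow{x} B\xrightarrow{y} C$ with $x$ an inflation, $\eta$ is in fact a conflation. By definition of right exact $\mathbb{E}$-triangle, there is an $\mathbb{E}$-triangle $K\xrightarrow{h_2} B\xrightarrow{y} C\overset{\delta}{\dashrightarrow}$ and a deflation $h_1: A\to K$ which is compatible, with $x = h_2 h_1$. The strategy is to show $h_1$ is an isomorphism; once that is done, $\eta$ is equivalent to the $\mathbb{E}$-triangle realized by $\delta$ (pull back $\delta$ along $h_1^{-1}$, or simply observe $x=h_2h_1$ with $h_1$ invertible gives an equivalence of sequences), hence a conflation.

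To prove $h_1$ is an isomorphism I would use the compatibility hypothesis: $h_1$ is already assumed to be a deflation, so it suffices to show $h_1$ is also an inflation, and then the definition of ``compatible'' forces $h_1$ to be an isomorphism. To see $h_1$ is an inflation, apply Condition~\ref{2.6}(WIC)(1): we have morphisms $A\xrightarrow{h_1} K$ and $K\xrightarrow{h_2} B$ with composite $h_2h_1 = x$ an inflation, so WIC(1) yields that $h_1$ is an inflation. Combining, $h_1$ is a compatible morphism that is both an inflation and a deflation, hence an isomorphism.

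Finally I would package this: since $h_1: A\to K$ is an isomorphism, the commutative diagram with rows $A\xrightarrow{x} B\xrightarrow{y} C$ and $K\xrightarrow{h_2} B\xrightarrow{y} C$, vertical maps $h_1$, $\mathrm{id}_B$, $\mathrm{id}_C$, exhibits an equivalence of the two sequences (the middle map $\mathrm{id}_B$ is an isomorphism and both squares commute: $h_2 h_1 = x$ and $y\,\mathrm{id}_B = y$). Therefore $\eta$ is equivalent to $\mathfrak{s}\big((h_1^{-1})^*\delta\big)$ (or directly to $\mathfrak{s}(\delta)$ after relabelling), so $\eta$ realizes an $\mathbb{E}$-extension and is a conflation. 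The dual statement for left exact $\mathbb{E}$-triangles follows by applying the same argument in $\mathscr{C}^{op}$, using WIC(2) in place of WIC(1).

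**Main obstacle.** There is no deep obstacle here; the proof is essentially a direct unwinding of the definition of right exact $\mathbb{E}$-triangle together with the (WIC) condition. The only point requiring a little care is checking that the equivalence diagram genuinely produces a realization of an $\mathbb{E}$-extension (i.e. that pulling back $\delta$ along the isomorphism $h_1^{-1}$ is legitimate and that $\mathfrak{s}$ of the pulled-back extension is the class of $\eta$), which is immediate from the axioms (ET2), (ET3) governing realizations and from functoriality of $\mathbb{E}$ in the first variable. So the write-up is short: invoke (WIC)(1), invoke compatibility, conclude isomorphism, conclude conflation.
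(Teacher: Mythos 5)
Your argument is correct and is essentially the proof given in the cited source \cite[Lemma 2.10]{16}: WIC(1) applied to $x=h_2h_1$ makes $h_1$ an inflation, compatibility then forces $h_1$ to be an isomorphism, and the sequence is identified with a realization of an $\mathbb{E}$-extension. The only cosmetic slip is that the relevant extension is the pushforward $(h_1^{-1})_*\delta$ along the first component (not a pullback $(h_1^{-1})^*\delta$), since $h_1$ acts on the left-hand object.
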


\begin{corollary}
  \cite[Remark 2.11]{16} A sequence $\eta:A\stackrel{x}{\longrightarrow}B\stackrel{y}{\longrightarrow}C$ is both right exact and left exact if and only if $\eta$ is a conflation.
\end{corollary}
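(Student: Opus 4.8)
The plan is to prove the two implications separately, with Lemma~\ref{2.8} and its stated dual carrying essentially all of the weight; Condition~\ref{2.6} will not be needed.

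For the ``if'' direction, suppose $\eta:A\stackrel{x}{\longrightarrow}B\stackrel{y}{\longrightarrow}C$ is a conflation, so it realizes some $\mathbb{E}$-extension $\delta\in\mathbb{E}(C,A)$ and we have the $\mathbb{E}$-triangle $A\stackrel{x}{\longrightarrow}B\stackrel{y}{\longrightarrow}C\stackrel{\delta}{\dashrightarrow}$. To exhibit $\eta$ as a right exact $\mathbb{E}$-triangle I would use this very triangle, taking $K=A$, $h_2=x$ and $h_1=\id_A$: the identity $\id_A$ is an isomorphism, hence a compatible deflation, and $x=h_2h_1$ holds tautologically. Dually, the choices $K=C$, $g_1=y$, $g_2=\id_C$ realize $\eta$ as a left exact $\mathbb{E}$-triangle. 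The one point worth a word is that $\id_A$ really is a deflation and $\id_C$ an inflation, which is immediate from additivity of $\mathfrak{s}$ applied to the split extensions in $\mathbb{E}(A,0)$ and $\mathbb{E}(0,C)$, whose realizations are $0\longrightarrow A\stackrel{\id_A}{\longrightarrow}A$ and $C\stackrel{\id_C}{\longrightarrow}C\longrightarrow 0$.

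For the ``only if'' direction, suppose $\eta$ is both right exact and left exact. Unwinding the (dual of the) definition of left exactness, there are an $\mathbb{E}$-triangle $A\stackrel{x}{\longrightarrow}B\stackrel{g}{\longrightarrow}L\stackrel{\delta'}{\dashrightarrow}$ and a compatible inflation $L\to C$ whose composite with $g$ equals $y$; in particular $x$ is an inflation, being the first morphism of an $\mathbb{E}$-triangle. Applying Lemma~\ref{2.8} to the right exact $\mathbb{E}$-triangle $\eta$, the fact that $x$ is an inflation forces $\eta$ to be a conflation. (Symmetrically, one could instead extract from right exactness that $y$ is a deflation and appeal to the dual of Lemma~\ref{2.8}.)

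I do not anticipate a genuine obstacle: the corollary is little more than a bookkeeping consequence of Lemma~\ref{2.8} and its dual. The most delicate point is the verification, in the ``if'' direction, that the identity morphisms play the role of the compatible deflation and inflation demanded by the definitions of right and left exact $\mathbb{E}$-triangle; once that is granted, both implications are immediate.
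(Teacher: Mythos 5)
Your proposal is correct and follows exactly the route the paper intends: the corollary is stated without proof (it is quoted from \cite[Remark 2.11]{16}), but it is placed immediately after Lemma~\ref{2.8} precisely because the ``only if'' direction is the observation that left exactness makes $x$ an inflation and Lemma~\ref{2.8} then upgrades the right exact structure to a conflation, while the ``if'' direction is the trivial witness $h_1=\id$ (an isomorphism, hence a compatible deflation/inflation). Both of your verifications, including the small point that identities are compatible deflations and inflations, are sound.
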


\begin{definition}
  \cite[Definition 2.12]{16}Let $(\mathcal{A}, \mathbb{E}_{\mathcal{A}}, \mathfrak{s}_{\mathcal{A}})$ and $(\mathcal{B}, \mathbb{E}_{\mathcal{B}}, \mathfrak{s}_{\mathcal{B}})$ be extriangulated categories. An additive covariant functor $F:\mathcal{A}\longrightarrow\mathcal{B}$ is called a right {\em exact functor} if it satisfies the following conditions:

  (1) If $f$ is a compatible morphism in $\mathcal{A}$, then $Ff$ is compatible in $\mathcal{B}$.

  (2) If $A\stackrel{x}{\longrightarrow}B\stackrel{y}{\longrightarrow}C$ is right exact in $\mathcal{A}$, then $FA\stackrel{Fx}{\longrightarrow}FB\stackrel{Fy}{\longrightarrow}FC$ is right exact in

  \quad\; $\mathcal{B}$. (Then for any $\mathbb{E}_{\mathcal{A}}$-triangle $A\stackrel{x}{\longrightarrow}B\stackrel{y}{\longrightarrow}C\stackrel{\delta}{\dashrightarrow}$, , there exists an $\mathbb{E}_{\mathcal{A}}$-triangle

  \quad\;\,$A'\stackrel{f}{\longrightarrow}FB\stackrel{Fy}{\longrightarrow}FC\dashrightarrow$ such that $Fx=fg$ and $g:FA\longrightarrow A'$ is a deflation and is

  \quad\;\,compatible. Moreover, $A'$ is uniquely determined up to isomorphism.)

  (3) There exists a natural transformation
   $$\eta=\{\eta_{(C,A)}:\mathbb{E}_{\mathcal{A}}(C,A)\longrightarrow\mathbb{E}_{\mathcal{B}}(F^{\rm op}C,A')\}_{(C,A)\in\mathcal{A}^{\rm op}\times\mathcal{A}}$$
  \quad\quad\;\;\,such that $\mathfrak{s}_{\mathcal{B}}(\eta_{(C,A)}(\delta))=[A'\stackrel{f}{\longrightarrow}FB\stackrel{Fy}{\longrightarrow}FC]$.

  Dually, we define the {\em left exact} functor between two extriangulated categories.

  A functor is called {\em exact} is it is both right exact and left exact.
\end{definition}

For any object $C\in\mathscr{C}$ , there exist $\mathbb{E}$-triangles $$A\stackrel{x}{\longrightarrow}P\stackrel{y}{\longrightarrow}C\stackrel{\delta}{\dashrightarrow} \;{\rm and}\; C\stackrel{x}{\longrightarrow}I\stackrel{y}{\longrightarrow}A'\stackrel{\delta}{\dashrightarrow}$$
with $P\in\mathcal{P}(\mathscr{C})$ and $I\in\mathcal{I}(\mathscr{C})$. In this case, $A$ is called the {\em syzygy} and $A'$ is called the {\em cosyszygy} of $C$, which are denoted by $\Omega(C)$ and $\Sigma(C)$, respectively.

For any subcategory $\mathscr{T}$ of $\mathscr{C}$, put $\Omega^0\mathscr{T}=\mathscr{T}$, and for $i>0$ we define $\Omega^i\mathscr{T}$ inductively by $\Omega^i(\mathscr{T})=\Omega(\Omega^{i-1}\mathscr{T})$, i.e. the subcategory consisting of syzygies of objects in $\Omega^{i-1}\mathscr{T}$. We call $\Omega^i\mathscr{T}$ the {\em i}-th syzygy of $\mathscr{T}$. Dually, we can define the {\em i}-th cosyzygy of $\mathscr{T}$ denoted by $\Sigma^i\mathscr{T}$ for $i\geqslant0$.

By \cite[Lemma 5.1]{17}, the higher extension group is defined as
$$\mathbb{E}^{i+1}(X,Y)\cong\mathbb{E}(X,\Sigma^iY)\cong\mathbb{E}(\Omega^iX,Y)$$
for any $X,Y\in\mathscr{C}$ and $i\geqslant0$.

\begin{remark}
  For any subcategory $\mathscr{T}$ of $\mathscr{C}$, we define $$\mathscr{T}^\bot=\{X\in\mathscr{C}~|~\mathbb{E}^i(T,X)=0,~\forall~i\geqslant1,~ T\in\mathscr{T}\}$$
  $$^{\bot}\mathscr{T}=\{X\in\mathscr{C}~|~\mathbb{E}^i(X,T)=0,~\forall~i\geqslant1,~ T\in\mathscr{T}\}$$.
\end{remark}

The following result is well-known in \cite{17} and we will use it frequently in the following.

\begin{proposition}\label{2.13}
   \cite[Proposition 5.2]{17} Let $A\stackrel{x}{\longrightarrow}B\stackrel{y}{\longrightarrow}C\stackrel{\delta}{\dashrightarrow}$ be an $\mathbb{E}$-triangle. There exist long exact sequences $$\ldots\longrightarrow \mathbb{E}^i(X,A)\longrightarrow\mathbb{E}^i(X,B)\longrightarrow\mathbb{E}^i(X,C)$$
  $$\,\,\,\longrightarrow\mathbb{E}^{i+1}(X,A)\longrightarrow\mathbb{E}^{i+1}(X,B)\longrightarrow\ldots $$ and $$\ldots\longrightarrow \mathbb{E}^i(C,X)\longrightarrow\mathbb{E}^i(B,X)\longrightarrow\mathbb{E}^i(A,X)$$
  $$\,\,\,\longrightarrow\mathbb{E}^{i+1}(C,X)\longrightarrow\mathbb{E}^{i+1}(B,X)\longrightarrow\ldots $$ for any objects $X\in\mathscr{C}$ and $i\geqslant0$.
\end{proposition}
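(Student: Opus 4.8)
The plan is to obtain both long exact sequences from the six-term exact sequences attached to a single $\mathbb{E}$-triangle and then propagate them to higher degrees by dimension shifting, using the isomorphisms $\mathbb{E}^{i+1}(X,Y)\cong\mathbb{E}(X,\Sigma^{i}Y)\cong\mathbb{E}(\Omega^{i}X,Y)$ recorded above. For the base case, recall from Nakaoka--Palu \cite{8} that the $\mathbb{E}$-triangle $A\stackrel{x}{\longrightarrow}B\stackrel{y}{\longrightarrow}C\stackrel{\delta}{\dashrightarrow}$ induces, for every $Y\in\mathscr{C}$, exact sequences
$$\mathscr{C}(Y,A)\longrightarrow\mathscr{C}(Y,B)\longrightarrow\mathscr{C}(Y,C)\stackrel{\delta_{\sharp}}{\longrightarrow}\mathbb{E}(Y,A)\longrightarrow\mathbb{E}(Y,B)\longrightarrow\mathbb{E}(Y,C)$$
and
$$\mathscr{C}(C,Y)\longrightarrow\mathscr{C}(B,Y)\longrightarrow\mathscr{C}(A,Y)\stackrel{\delta^{\sharp}}{\longrightarrow}\mathbb{E}(C,Y)\longrightarrow\mathbb{E}(B,Y)\longrightarrow\mathbb{E}(A,Y),$$
natural in $Y$, where $\delta_{\sharp}(f)=f^{\ast}\delta$ and $\delta^{\sharp}(g)=g_{\ast}\delta$. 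With the conventions $\mathbb{E}^{0}=\mathscr{C}(-,-)$ and $\mathbb{E}^{1}=\mathbb{E}$, this establishes both statements in degrees $i=0,1$, including the first connecting map and exactness at $\mathbb{E}(Y,B)$ and $\mathbb{E}(Y,C)$.

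For the inductive step I would prove, by induction on $i\geqslant1$ and for \emph{all} objects $X$ simultaneously, exactness of the first sequence at the three consecutive spots $\mathbb{E}^{i}(X,B)$, $\mathbb{E}^{i}(X,C)$, $\mathbb{E}^{i+1}(X,A)$ together with the connecting map at that junction. Fix an $\mathbb{E}$-triangle $\Omega X\longrightarrow P\longrightarrow X\stackrel{\rho}{\dashrightarrow}$ with $P\in\mathcal{P}(\mathscr{C})$; since $\mathbb{E}^{j}(P,-)\cong\mathbb{E}(P,\Sigma^{j-1}(-))=0$ for $j\geqslant1$, the recorded isomorphisms restrict to natural identifications $\mathbb{E}^{i+1}(X,-)\cong\mathbb{E}^{i}(\Omega X,-)$ for $i\geqslant1$ (both sides being $\mathbb{E}(\Omega^{i}X,-)$). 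Under these, the degree-$(i{+}1)$ stretch $\mathbb{E}^{i+1}(X,A)\to\mathbb{E}^{i+1}(X,B)\to\mathbb{E}^{i+1}(X,C)$ of the sequence for $X$ becomes the degree-$i$ stretch $\mathbb{E}^{i}(\Omega X,A)\to\mathbb{E}^{i}(\Omega X,B)\to\mathbb{E}^{i}(\Omega X,C)$ of the sequence for $\Omega X$, which is exact by the inductive hypothesis; one then transports the connecting map for $\Omega X$ back along the identifications and checks exactness at the junction. At the first junction $i=1$ the connecting map $\mathbb{E}^{1}(X,C)\to\mathbb{E}^{2}(X,A)$ is produced by hand: $\rho_{\sharp}\colon\mathscr{C}(\Omega X,C)\to\mathbb{E}(X,C)$ is surjective (again because $\mathbb{E}(P,C)=0$) with kernel the image of $\mathscr{C}(P,C)$, while $\delta_{\sharp}\colon\mathscr{C}(\Omega X,C)\to\mathbb{E}(\Omega X,A)\cong\mathbb{E}^{2}(X,A)$ also kills that image, so $\delta_{\sharp}$ descends along $\rho_{\sharp}$, and exactness follows from the six-term sequences for the two $\mathbb{E}$-triangles together with the naturality of $\rho_{\sharp}$ and $\delta_{\sharp}$. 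The second, contravariant sequence need not be treated separately: it is exactly the first statement applied to the opposite extriangulated category $\mathscr{C}^{\rm op}$, under which $\Omega$ and $\Sigma$ swap, $\mathcal{P}(\mathscr{C})$ and $\mathcal{I}(\mathscr{C})$ swap, the standing hypotheses are preserved, and the $\mathbb{E}$-triangle $A\to B\to C\dashrightarrow$ becomes $C\to B\to A\dashrightarrow$.

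The genuine work is the bookkeeping at the junctions between consecutive six-term blocks: one must verify that the dimension-shift isomorphisms are natural in all variables and intertwine the connecting maps, so that the blocks really do splice into a single long exact sequence with well-defined connecting homomorphisms. Concretely, given the $3\times3$ diagram supplied by (ET4) relating a projective presentation of $X$ with the $\mathbb{E}$-triangle $A\to B\to C\dashrightarrow$, one needs the compatibility identities $\mathbb{E}(d,A)(\delta'')=\delta$ and $\mathbb{E}(E,f)(\delta'')=\mathbb{E}(e,B)(\delta')$ of that axiom to see that the square comparing the connecting map for $X$ in degree $i{+}1$ with the one for $\Omega X$ in degree $i$ commutes. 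This naturality and compatibility chase for $\delta_{\sharp}$ and $\delta^{\sharp}$ along morphisms of $\mathbb{E}$-triangles is where I expect the main difficulty to sit; once it is in place, exactness at each individual spot is inherited verbatim from the base-case six-term sequences.
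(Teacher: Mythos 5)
The paper offers no proof of this proposition: it is quoted verbatim from Liu--Nakaoka \cite[Proposition 5.2]{17}, so there is no internal argument to compare yours against. Your sketch --- the six-term exact sequences of \cite{8} as base case, dimension shifting via $\mathbb{E}^{i+1}(X,-)\cong\mathbb{E}^{i}(\Omega X,-)$ off a projective presentation, the first higher connecting map obtained by descending $\delta_{\sharp}$ along the surjection $\mathscr{C}(\Omega X,C)\twoheadrightarrow\mathbb{E}(X,C)$, and passage to $\mathscr{C}^{\rm op}$ for the contravariant sequence --- is exactly the standard argument used in that source, and it is sound. Beyond the junction bookkeeping you already flag, the only points to nail down are that the resulting connecting maps do not depend on the chosen projective presentation of $X$ (this is part of \cite[Lemma 5.1]{17}) and the short chase giving exactness at $\mathbb{E}(X,C)$ and $\mathbb{E}^{2}(X,A)$.
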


Finally, let's recall a definition in \cite{7}. An {\em $\mathbb{E}$-triangle~sequence} in $\mathscr{C}$ is defined as a sequence $$\ldots\longrightarrow X_{n-1}\stackrel{d_{n-1}}{\longrightarrow}X_{n}\stackrel{d_n}{\longrightarrow}X_{n+1}\longrightarrow\ldots$$ such that for any $n$, there are $\mathbb{E}$-triangles $$K_{n}\stackrel{g_n}{\longrightarrow}X_n\stackrel{f_n}{\longrightarrow}K_{n+1}\stackrel{\delta_n}{\dashrightarrow}$$ in $\mathscr{C}$ and the differential $d_n=g_{n+1}f_n$.

A 4-term $\mathbb{E}$-triangle sequence $A\stackrel{x}{\longrightarrow}B\stackrel{y}{\longrightarrow}C\stackrel{z}{\longrightarrow}D$ is called {\em right~exact} (resp. {\em left~exact}) if there are $\mathbb{E}$-triangles $A\stackrel{x}{\longrightarrow}B\stackrel{y_1}{\longrightarrow}K\dashrightarrow$ and $K\stackrel{y_2}{\longrightarrow}C\stackrel{z}{\longrightarrow}D\dashrightarrow$ such that $y_2y_1=y$ and $y_1$ (resp. $y_2$) is compatible.

The following, which is similar to the famous Horseshoe Lemma, plays a very important role in the proof of Section 4.

\begin{lemma}\label{2.12}
  Let $A\stackrel{x}{\longrightarrow}B\stackrel{y}{\longrightarrow}C$ be a conflation in $\mathscr{C}$.

  {\rm (1)} Assume that $$\xymatrix@R=0.5ex{A\ar[r]^{d^0_A}&A^0\ar[rr]^{d^1_A}\ar[rd]&&A^1\ar[rr]^{d^2_A}\ar[rd]&&A^2\ar[r]&\ldots\;;\\
  &&K^1_A\ar[ru]&&K^2_A\ar[ru]}
  $$$$\xymatrix@R=0.5ex{C\ar[r]^{d^0_C}&C^0\ar[rr]^{d^1_C}\ar[rd]&&C^1\ar[rr]^{d^2_C}\ar[rd]&&C^2\ar[r]&\ldots\\
  &&K^1_C\ar[ru]&&K^2_C\ar[ru]}$$ are two $\mathbb{E}$-triangle sequences in $\mathscr{C}$. If $\mathbb{E}(C,A^0)=0$ and $\mathbb{E}(K^i_C,A^i)=0$ for any $i\geqslant1$, there is a commutative diagram
  $$\xymatrix{A\ar[r]^{x}\ar[d]_{d^0_A}&B\ar[r]^{y}\ar[d]&C\ar[d]^{d^0_C}\\
  A^0\ar[r]^{\tiny{\begin{pmatrix}1\\0\end{pmatrix}}\quad\quad}\ar[d]_{d^1_A}&A^0\oplus C^0\ar[r]^{\quad\tiny{\setlength{\arraycolsep}{1.2pt}\begin{pmatrix}0 & 1\end{pmatrix}}}\ar[d]&C^0\ar[d]^{d^1_C}\\
  A^1\ar[r]^{\tiny{\begin{pmatrix}1\\0\end{pmatrix}}\quad\quad}\ar[d]_{d^2_A}&A^1\oplus C^1\ar[r]^{\quad\tiny{\setlength{\arraycolsep}{1.2pt}\begin{pmatrix}0 & 1\end{pmatrix}}}\ar[d]&C^1\ar[d]^{d^2_C}\\
  \vdots&\vdots&\vdots}
  $$ where rows are conflations and columns are $\mathbb{E}$-triangle sequences.

  (2) Assume $$\xymatrix@R=0.5ex{\ldots\ar[r]&A_2 \ar[rr]^{d_2^A}\ar[rd]&&A_1\ar[rr]^{d_1^A}\ar[rd]&&A_0\ar[r]^{d_0^A}& A\;;\\
  &&K_2^A\ar[ru]&&K_1^A\ar[ru]}
  $$$$\xymatrix@R=0.5ex{\ldots\ar[r]&C_2 \ar[rr]^{d_2^C}\ar[rd]&&C_1\ar[rr]^{d_1^C}\ar[rd]&&C_0\ar[r]^{d_0^C}& C\\
  &&K_2^C\ar[ru]&&K_1^C\ar[ru]}$$ are two $\mathbb{E}$-triangle sequences in $\mathscr{C}$. If $\mathbb{E}(C_0,A)=0$ and $\mathbb{E}(C_i,K^A_i)=0$ for any $i\geqslant1$, there is a commutative diagram

  $$\xymatrix{\vdots\ar[d]_{d^0_A}&\vdots\ar[d]&\vdots\ar[d]^{d^0_C}\\
  A_1\ar[r]^{\tiny{\begin{pmatrix}1\\0\end{pmatrix}}\quad\quad}\ar[d]_{d_2^A}&A_1\oplus C_1\ar[r]^{\quad\tiny{\setlength{\arraycolsep}{1.2pt}\begin{pmatrix}0 & 1\end{pmatrix}}}\ar[d]&C_1\ar[d]^{d_2^C}\\
  A_0\ar[r]^{\tiny{\begin{pmatrix}1\\0\end{pmatrix}}\quad\quad}\ar[d]_{d_1^A}&A_0\oplus C_0\ar[r]^{\quad\tiny{\setlength{\arraycolsep}{1.2pt}\begin{pmatrix}0 & 1\end{pmatrix}}}\ar[d]&C_0\ar[d]^{d_1^C}\\
  A\ar[r]^{x}&B\ar[r]^{y}&C}
  $$ where rows are conflations and columns are $\mathbb{E}$-triangle sequences.
\end{lemma}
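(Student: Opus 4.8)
The plan is to prove part (1) by induction, building the commutative diagram one row at a time; part (2) is entirely dual and would be obtained by reversing all arrows and invoking $(\mathrm{ET4})^{\mathrm{op}}$ in place of $(\mathrm{ET4})$, so I would only write out (1) in full. The base step is to produce the top two rows. Starting from the conflation $A\stackrel{x}{\longrightarrow}B\stackrel{y}{\longrightarrow}C$ and the left and right $\mathbb{E}$-triangles $A\stackrel{d^0_A}{\longrightarrow}A^0\longrightarrow \Omega_A\dashrightarrow$ (here $\Omega_A$ denotes the cone of $d^0_A$, which maps on to $K^1_A$) and $C\stackrel{d^0_C}{\longrightarrow}C^0\longrightarrow \Omega_C\dashrightarrow$, the hypothesis $\mathbb{E}(C,A^0)=0$ kills the obstruction to lifting: more precisely, applying $\mathbb{E}(-,A^0)$ and $\mathbb{E}(C,-)$ to the relevant triangles and using $\mathbb{E}(C,A^0)=0$ shows that the composite $A\stackrel{x}{\longrightarrow}B\stackrel{d^0_A\circ(\text{?})}{}$ — rather, that $d^0_A\colon A\to A^0$ factors through $x$, equivalently the class $x_*\delta_0\in\mathbb{E}(C,A^0)$ vanishing forces a morphism $B\to A^0$ extending $d^0_A$; together with $d^0_C y$ this assembles into the second row $A^0\to A^0\oplus C^0\to C^0$ and a morphism of conflations. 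The standard Horseshoe-type argument then gives that the middle column $B\to A^0\oplus C^0\to \cdot$ is again part of an $\mathbb{E}$-triangle sequence whose first cosyzygy fits between $\Omega_A$ and $\Omega_C$; this is where $(\mathrm{ET4})$ is used, to splice the two $\mathbb{E}$-triangles into the $3\times 3$ diagram and identify the new middle object.

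For the inductive step I would replace the original conflation $A\to B\to C$ by the conflation $K^i_A\to M^i\to K^i_C$ obtained in the previous stage (where $M^i$ is the object sitting between $A^i\oplus C^i$ in the column), and replace the two $\mathbb{E}$-triangle sequences by their truncations starting at $A^i$ and $C^i$ respectively. The hypothesis $\mathbb{E}(K^i_C,A^i)=0$ is exactly what is needed to rerun the base-step argument at level $i$: it guarantees the lift $M^i\to A^i$ extending $K^i_A\to A^i$ exists, producing the $(i+1)$-st row as the split conflation $A^i\to A^i\oplus C^i\to C^i$ with a morphism from the conflation $K^i_A\to M^i\to K^i_C$, and $(\mathrm{ET4})$ again produces the next conflation $K^{i+1}_A\to M^{i+1}\to K^{i+1}_C$ among the cocones. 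One must also check that the vertical composites in the middle column are the differentials $M^i\to A^i\oplus C^i\to M^{i+1}$ of an $\mathbb{E}$-triangle sequence; this follows formally from how the maps were constructed as $g\circ f$ of the two $\mathbb{E}$-triangles supplied by $(\mathrm{ET4})$, and from the fact that the maps $d^i_A, d^i_C$ in the given sequences already factor in that way.

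The main obstacle is the bookkeeping in the $(\mathrm{ET4})$ application: one has to be careful that the object $M^{i+1}$ produced as a cocone genuinely surjects (via a deflation compatible with the rest) onto $K^{i+1}_C$ with kernel $K^{i+1}_A$, so that the induction hypothesis can be reapplied, and that the resulting vertical maps $M^i\to A^i\oplus C^i$ and $A^i\oplus C^i\to M^{i+1}$ compose to the correct differential. Here I would lean on Lemma \ref{2.8} and its corollary (a sequence is a conflation iff it is both left and right exact) together with Condition \ref{2.6} (WIC), which ensures the intermediate maps behave as inflations/deflations as expected; without WIC the compatibility of the spliced maps could fail. A secondary but routine point is verifying additivity of the split conflations in the middle rows — that the middle map really is $\left(\begin{smallmatrix}d^i_A&0\\ \ast&d^i_C\end{smallmatrix}\right)$-shaped up to the identifications — which follows from the construction of the lift and the fact that $\left(\begin{smallmatrix}1\\0\end{smallmatrix}\right)$ and $\left(\begin{smallmatrix}0&1\end{smallmatrix}\right)$ realize the split extension. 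Once the single inductive step is cleanly stated, both (1) and (2) follow, the latter by passing to $\mathscr{C}^{\mathrm{op}}$.
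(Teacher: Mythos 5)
Your proposal follows essentially the same route as the paper: use the vanishing of $\mathbb{E}(C,A^0)$ (resp. $\mathbb{E}(K^i_C,A^i)$) to produce the middle vertical inflation $B\to A^0\oplus C^0$ compatible with the split row, complete it to a conflation, obtain the induced conflation $K^1_A\to K^1_B\to K^1_C$ on the cones via a $3\times3$-type argument, and iterate; part (2) is dual. The only difference is presentational — the paper outsources the two key steps (existence of the compatible inflation into the direct sum, and the $3\times3$ completion) to \cite[Lemmas 4.14, 4.15]{18} rather than rederiving them from $(\mathrm{ET4})$ as you sketch.
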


\begin{proof}
  We only prove (1), (2) can be proved similarly.

  By \cite[Lemma 4.15]{18}, there is an inflation $d^0_B:B\longrightarrow A_0\oplus C_0$ which makes the following diagram commutative since $\mathbb{E}(C,A^0)=0$.
  $$\xymatrix{A\ar[r]^{x}\ar[d]_{d^0_A}&B\ar[r]^{y}\ar[d]^{d^0_B}\ar[d]&C\ar[d]^{d^0_C}\\
  A^0\ar[r]^{\tiny{\begin{pmatrix}1\\0\end{pmatrix}}\quad\quad}&A^0\oplus C^0\ar[r]^{\quad\tiny{\setlength{\arraycolsep}{1.2pt}\begin{pmatrix}0 & 1\end{pmatrix}}}&C^0 }$$
  We complete $d^0_B$ by the conflation $$B\stackrel{d^0_B}{\longrightarrow}A_0\oplus C_0 \longrightarrow K^B_1$$ for some $K^B_1\in\mathscr{C}$. By the duality of \cite[Lemma 4.14]{18}, there is a conflation $K^A_1\stackrel{x_1}{\longrightarrow}K^B_1\stackrel{y_1}{\longrightarrow}K^C_1$ which makes the following diagram commutative
  $$\xymatrix{A\ar[r]^{x}\ar[d]_{d^0_A}&B\ar[r]^{y}\ar[d]^{d^0_B}&C\ar[d]^{d^0_C}\\
  A^0\ar[r]^{\tiny{\begin{pmatrix}1\\0\end{pmatrix}}\quad\quad}\ar[d]&A^0\oplus C^0\ar[r]^{\quad\tiny{\setlength{\arraycolsep}{1.2pt}\begin{pmatrix}0 & 1\end{pmatrix}}}\ar[d]&C^0\ar[d]\\
  K^A_1\ar[r]^{x_1}&K^B_1\ar[r]^{y_1}&K^A_1}$$ where each row and column is a conflation. Since $\mathbb{E}(K^i_C,A^{i+1})=0$ for any $i\geqslant1$, we can repeat the construction as above. Therefore, we complete the proof.
\end{proof}

\section{Wakamatsu-tilting subcategories}

In this section, we will begin with the definitions of Wakamatsu-tilting subcategories and Wakamatsu-cotilting subcategories in a extriangulated category. Then we will show that these two definitions and the definitions of $\infty$-tilting subcategories and $\infty$-cotilting subcategories in \cite{15} are equivalent.

For a subcategory $\mathscr{T}\subseteq\mathscr{C}$, denote by ${_{\mathscr{T}}\mathcal{X}}$ the subcategory of all objects $M\in\mathscr{C}$ such that there is an infinite $\mathbb{E}$-triangle sequence
$$\xymatrix@R=0.5ex{\ldots\ar[r]&T_2^M\ar[rr]^{d_2^M}\ar[rd]&&T_1^M\ar[rr]^{d_1^M}\ar[rd]&&T_0^M\ar[r]^{d_0^M}&M\\
&&K_1^M\ar[ru]&&K_0^M\ar[ru]}$$
with $M,K_i^M\in\mathscr{T}^{\bot},T_i^M\in\mathscr{T}$ for $i\geqslant0$. Dually, we denote by $\mathcal{X}_{\mathscr{T}}$ the subcategory of all objects $N\in\mathscr{C}$ such that there is an infinite $\mathbb{E}$-triangle sequence
$$\xymatrix@R=0.5ex{N\ar[r]^{d^0_N}&T^0_N\ar[rr]^{d^1_N}\ar[rd]&&T^1_N\ar[rr]^{d^2_N}\ar[rd]&&T^2_N\ar[r]&\ldots\\
&&C^0_N\ar[ru]&&C^1_N\ar[ru]}$$
with $N,C^i_N\in{^{\bot}\mathscr{T}},T^i_N\in\mathscr{T}$ for $i\geqslant0$.

Let $\mathscr{X}\subseteq\mathscr{Y}$ be two subcategories of $\mathscr{C}$. The subcategory $\mathscr{X}$ is called an {\em $\mathbb{E}$-projective~generator} of $\mathscr{Y}$ if $\mathbb{E}(X,Y)=0$ for any $X\in\mathscr{X},Y\in\mathscr{Y}$ and for any $Y\in\mathscr{Y}$, there is a conflation $Y_1\longrightarrow X\longrightarrow Y$ with $X\in\mathscr{X},Y_1\in\mathscr{Y}$. Dually, we can define the {\em $\mathbb{E}$-injective~cogenerator}.

From now on, each subcategory $\mathscr{T}$ is closed under direct summands.

\begin{definition}\label{3.1}
  \cite[Definition 3.1]{12} Let $\mathscr{T}$ be a subcategory of $\mathscr{C}$. $\mathscr{T}$ is called a {\em Wakamatsu-tilting~subcategory} if the following conditions are satisfied:

  (WT1) $\mathbb{E}^n(\mathscr{T},\mathscr{T})=0$ for any $n\geqslant1$,

  (WT2) $\mathcal{P}(\mathscr{C})\subseteq\mathcal{X}_{\mathscr{T}}$.
\end{definition}

Dually, we have the definition of Wakamatsu-cotilting subcategories.

\begin{definition}\label{3.2}
  Let $\mathscr{T}$ be a subcategory of $\mathscr{C}$. $\mathscr{T}$ is called a {\em Wakamatsu-cotilting subcategory} if the following conditions are satisfied:

  (WC1) $\mathbb{E}^n(\mathscr{T},\mathscr{T})=0$ for any $n\geqslant1$,

  (WC2) $\mathcal{I}(\mathscr{C})\subseteq {_{\mathscr{T}}\mathcal{X}}$.
\end{definition}

Zhang, Wei and Wang defined $\infty$-tilting subcategories and $\infty$-cotilting subcategories in an extriangulated category. In fact, these four subcategories are equivalent.

\begin{definition}\label{3.3}
  \cite[Definition 3.1]{15} Let $\mathscr{T}$ be a subcategory of $\mathscr{C}$. $\mathscr{T}$ is called an $\infty$-{\em tilting~subcategory} if the following conditions are satisfied:

  ($\infty$T1) $\mathscr{T}$ is an $\mathbb{E}$-projective generator of ${_{\mathscr{T}}\mathcal{X}}$,

  ($\infty$T2) $\mathcal{I}(\mathscr{C})\subseteq{_{\mathscr{T}}\mathcal{X}}$.
\end{definition}

\begin{definition}\label{3.4}
  \cite[Definition 3.2]{15} Let $\mathscr{T}$ be a subcategory of $\mathscr{C}$. $\mathscr{T}$ is called an $\infty$-{\em cotilting~subcategory} if the following conditions are satisfied:

  ($\infty$C1) $\mathscr{T}$ is an $\mathbb{E}$-injective cogenerator of $\mathcal{X}_{\mathscr{T}}$,

  ($\infty$C2) $\mathcal{P}(\mathscr{C})\subseteq\mathcal{X}_{\mathscr{T}}$.
\end{definition}

In order to show that these four definitions coincide with each other, we need the following lemma.

\begin{lemma}\label{3.5}
  Let $\mathscr{T}$ be a subcategory of $\mathscr{C}$ with $\mathbb{E}^n(\mathscr{T},\mathscr{T})=0$ for any $n\geqslant1$. Then $\mathcal{P}(\mathscr{C})\subseteq\mathcal{X}_{\mathscr{T}}$ if and only if $\mathcal{I}(\mathscr{C})\subseteq{_{\mathscr{T}}\mathcal{X}}$.
\end{lemma}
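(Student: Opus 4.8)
My overall strategy is to exploit the self-duality of the statement to halve the work, and then to build the required left $\mathscr{T}$-resolution of an injective object one conflation at a time, the whole difficulty being the production of right $\mathscr{T}$-approximations whose cocones remain in $\mathscr{T}^{\bot}$. The first observation is that the assertion is self-dual: passing to $\mathscr{C}^{op}$ exchanges $\mathcal{P}(\mathscr{C})$ with $\mathcal{I}(\mathscr{C})$, exchanges ${^{\bot}\mathscr{T}}$ with $\mathscr{T}^{\bot}$ (because $\mathbb{E}_{\mathscr{C}^{op}}^{i}(X,T)=\mathbb{E}^{i}(T,X)$), and converts a $\mathscr{T}$-coresolution into a $\mathscr{T}$-resolution, hence carries the class $\mathcal{X}_{\mathscr{T}}$ to ${_{\mathscr{T}}\mathcal{X}}$ and back, while preserving $\mathbb{E}^{n}(\mathscr{T},\mathscr{T})=0$, the Krull--Schmidt hypothesis, the existence of enough projectives and injectives, and Condition~\ref{2.6}. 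Therefore the implication $\mathcal{P}(\mathscr{C})\subseteq\mathcal{X}_{\mathscr{T}}\Rightarrow\mathcal{I}(\mathscr{C})\subseteq{_{\mathscr{T}}\mathcal{X}}$, read in $\mathscr{C}^{op}$, yields its converse in $\mathscr{C}$; so it suffices to prove this one implication.

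Assume then $\mathcal{P}(\mathscr{C})\subseteq\mathcal{X}_{\mathscr{T}}$ and fix $I\in\mathcal{I}(\mathscr{C})$; since $I$ is injective, $\mathbb{E}^{i}(\mathscr{T},I)=0$, so $I\in\mathscr{T}^{\bot}$. I would build the witnessing sequence inductively, reducing it to the single step: for each $X\in\mathscr{T}^{\bot}$, produce a conflation $K\to T\xrightarrow{f}X$ with $T\in\mathscr{T}$ and $f$ a right $\mathscr{T}$-approximation (which is then a deflation by Condition~\ref{2.6}), and observe that automatically $K\in\mathscr{T}^{\bot}$. Indeed, feeding this conflation into the long exact sequences of Proposition~\ref{2.13} with a variable $T'\in\mathscr{T}$ in the first argument and using $\mathbb{E}^{n}(\mathscr{T},\mathscr{T})=0$ gives $\mathbb{E}^{j}(T',K)\cong\mathbb{E}^{j-1}(T',X)=0$ for $j\geq2$ and $\mathbb{E}^{1}(T',K)\cong\mathrm{coker}\big(\mathscr{C}(T',T)\to\mathscr{C}(T',X)\big)$; thus $K\in\mathscr{T}^{\bot}$ is \emph{equivalent} to $f$ being a right $\mathscr{T}$-approximation. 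Splicing the conflations obtained by applying this step successively to $X=I,K_{0},K_{1},\dots$ then exhibits $I\in{_{\mathscr{T}}\mathcal{X}}$.

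The content is therefore to manufacture, from the hypothesis, a right $\mathscr{T}$-approximation of $X$ that is a deflation. I would start from a deflation $p\colon P\to X$ with $P\in\mathcal{P}(\mathscr{C})$ and use $P\in\mathcal{X}_{\mathscr{T}}$ to fix a $\mathscr{T}$-coresolution $P\to T^{0}\to T^{1}\to\cdots$ with all cosyzygies $C^{i}\in{^{\bot}\mathscr{T}}$; its first conflation $P\xrightarrow{u}T^{0}\to C^{0}$ is an inflation of $P$ into $\mathscr{T}$. When $X=I$, injectivity extends $p$ along $u$ to $\phi\colon T^{0}\to I$ with $\phi u=p$, and since $p$ is a deflation, Condition~\ref{2.6}(2) forces $\phi$ to be a deflation onto $I$. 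A standard octahedral computation then places its cocone $K$ in a conflation $\Omega I\to K\to C^{0}$. This already produces a $\mathscr{T}$-deflation onto $I$, and the remaining task is to correct $\phi$ into a genuine right $\mathscr{T}$-approximation.

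The main obstacle is precisely this correction, and it is where the self-orthogonality must do real work. The cocone $K$ is governed by its outer terms $\Omega I$ and $C^{0}$, but these are controlled only through $\mathbb{E}(-,\mathscr{T})$ (as $C^{0}\in{^{\bot}\mathscr{T}}$), whereas $K\in\mathscr{T}^{\bot}$ demands vanishing of $\mathbb{E}(\mathscr{T},-)$; reconciling these two \emph{a priori} independent orthogonality conditions is the crux. I expect to resolve it by replacing the single inflation $u$ with the entire $\mathscr{T}$-coresolution of $P$ and splicing it against injective copresentations of the cosyzygies $C^{i}$ through a Horseshoe-type pushout/pullback (in the spirit of Lemma~\ref{2.12}), so that the error terms $C^{i}$ are absorbed and the resulting map becomes surjective on $\mathscr{C}(\mathscr{T},-)$; here both $\mathbb{E}^{n}(\mathscr{T},\mathscr{T})=0$ and the injectivity of $I$ (which yields $\mathbb{E}(\mathscr{T},I)=0$) are exactly the inputs that should make the bookkeeping collapse. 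Once the base object $I$ is handled, the same construction applied to each syzygy $K_{i}\in\mathscr{T}^{\bot}$ propagates the resolution, and the opposite-category argument from the first step delivers the reverse implication.
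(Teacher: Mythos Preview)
Your proposal has a genuine gap, and you have in fact located it yourself: you isolate the inductive step ``for each $X\in\mathscr{T}^{\bot}$, produce a right $\mathscr{T}$-approximation of $X$ that is a deflation, with cocone again in $\mathscr{T}^{\bot}$'', but your construction of even a $\mathscr{T}$-deflation onto $X$ uses injectivity of $X$ (to extend $p\colon P\to X$ along the inflation $u\colon P\to T^{0}$). For the next syzygy $K_{0}\in\mathscr{T}^{\bot}$, which is not injective, the extension requires $\mathbb{E}(C^{0},K_{0})=0$; you only know $C^{0}\in{^{\bot}\mathscr{T}}$, and $K_{0}\notin\mathscr{T}$, so this vanishing is not available. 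Hence your claim that ``the same construction applied to each syzygy $K_{i}\in\mathscr{T}^{\bot}$ propagates the resolution'' does not go through. The second difficulty---upgrading a $\mathscr{T}$-deflation to a right $\mathscr{T}$-approximation---you explicitly leave as a hope (``I expect to resolve it by\dots''), and the Horseshoe-type splicing you gesture at needs exactly the $\mathbb{E}(C^{i},-)$-vanishing that is missing.

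The paper's proof supplies the missing idea: rather than trying to run the induction inside $\mathscr{T}^{\bot}$, it first proves ${_{\mathscr{T}}\mathcal{X}}=\mathscr{S}^{\bot}$ where $\mathscr{S}=\mathscr{T}\cup\{C^{i}_{P}:P\in\mathcal{P}(\mathscr{C}),\ i\geq 0\}$ collects $\mathscr{T}$ together with \emph{all} the cosyzygies arising from fixed $\mathscr{T}$-coresolutions of the projectives. For $M\in\mathscr{S}^{\bot}$ the vanishing $\mathbb{E}(C^{0}_{Q},M)=0$ is built in, so the pushout of a projective cover of $M$ along $Q\to T^{0}_{Q}$ produces a $\mathscr{T}$-deflation onto $M$; one then enlarges it to a right $\mathscr{T}$-approximation and checks by a dimension-shift (using both $M\in\mathscr{T}^{\bot}$ and $\mathbb{E}^{j}(C^{i}_{P},M)=0$) that the kernel $M_{1}$ again lies in $\mathscr{S}^{\bot}$. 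Since $\mathcal{I}(\mathscr{C})\subseteq\mathscr{S}^{\bot}$ trivially, the induction closes. The point you were missing is precisely this enlargement of the orthogonality class from $\mathscr{T}$ to $\mathscr{S}$, so that the extension step and its self-propagation under syzygies become automatic.
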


\begin{proof}
  We only prove the necessity; the sufficiency can be proved dually.

  We claim that ${_{\mathscr{T}}\mathcal{X}}=\mathscr{S}^{\bot}$ for some subcategory $\mathscr{S}\subseteq\mathscr{C}$. Firstly, we construct the $\mathscr{S}$. For any $P\in\mathcal{P}(\mathscr{C})\subseteq\mathcal{X}_{\mathscr{T}}$, we fix an infinite $\mathbb{E}$-triangle sequence
  $$\xymatrix@R=0.5ex{P\ar[r]^{d^0_P}&T^0_P\ar[rr]^{d^1_P}\ar[rd]&& T^1_P\ar[rr]^{d^2_P}\ar[rd]&&T^2_P\ar[r]&\ldots\\
  &&C^0_P\ar[ru]&&C^1_P\ar[ru]}$$ with $P,C^i_P\in{^{\bot}\mathscr{T}},T^i_P\in\mathscr{T}$ for $i\geqslant0$. We define $\bar{\mathscr{S}}=\{C^i_P~|~P\in\mathcal{P}(\mathscr{C}),~i\geqslant0\}$. Then $\mathscr{S}=\mathscr{T}\cup\bar{\mathscr{S}}$.

  By the definition of ${_{\mathscr{T}}\mathcal{X}}$, it is clear that $\mathscr{T}\subseteq{^{\bot}{_{\mathscr{T}}\mathcal{X}}}$. It suffices to show that $\mathbb{E}^j(C^i_P,M)=0$ for any $M\in{_{\mathscr{T}}\mathcal{X}},P\in\mathcal{P}(\mathscr{C}),j\geqslant1,i\geqslant0$. Consider the infinite $\mathbb{E}$-triangle sequence
  $$\xymatrix@R=0.5ex{\ldots\ar[r]&T_2^M\ar[rr]^{d_2^M}\ar[rd]&&T_1^M\ar[rr]^{d_1^M}\ar[rd]&&T_0^M\ar[r]^{d_0^M}&M.\\
  &&K_1^M\ar[ru]&&K_0^M\ar[ru]}$$
  $M,K_i^M\in\mathscr{T}^{\bot},T_i^M\in\mathscr{T}$ for $i\geqslant0$.
  Clearly, $K_i^M\in{_{\mathscr{T}}\mathcal{X}}$ for any $i\geqslant0$. By applying the functor $\mathscr{C}(-,M)$ to the conflation $P\stackrel{d^0_P}{\longrightarrow}T^0_P\longrightarrow C^0_P$, we have a long exact sequence
  $$\ldots\longrightarrow\mathscr{C}(T^0_P,M)\stackrel{\mathscr{C}(d^0_P,M)}{\longrightarrow}\mathscr{C}(P,M) \longrightarrow \mathbb{E}(C^0_P,M)\longrightarrow\mathbb{E}(T^0_P,M)=0
  $$since $M\in\mathscr{T}^{\bot}$. For any morphism $g:P\longrightarrow M$, $g$ lifts to a morphism $g':P\longrightarrow T_0^M$ and $g=d_0^Mg'$. By applying the functor $\mathscr{C}(-,T_0^M)$ to the conflaton $P\stackrel{d^0_P}{\longrightarrow}T^0_P\longrightarrow C^0_P$, we have a long exact sequence $$\ldots\longrightarrow\mathscr{C}(T^0_P,T_0^M)\stackrel{\mathscr{C}(d^0_P,T_0^M)}{\longrightarrow}\mathscr{C}(P,T_0^M) \longrightarrow \mathbb{E}(C^0_P,T_0^M)=0$$ since $C^0_P\in{^{\bot}\mathscr{T}}$ and $T_0^M\in\mathscr{T}$. Since $\mathscr{C}(d^0_P,T_0^M)$ is an epimorphism, $g'$ extend to a morphism $g'':T^0_P\longrightarrow T_0^M$ such that $g''d^0_P=g'$. Then $d^M_0g''d^0_P=d^M_0g'=g$, which induce that $\mathscr{C}(d^0_P,M)$ is surjective. Hence, $\mathbb{E}(C^0_P,M)=0$ for any $M\in{_{\mathscr{T}}\mathcal{X}}$. By applying the functor $\mathscr{C}(C_P^{t+1},-)$ to the conflation $K_0^M\longrightarrow T^M_0\stackrel{d^M_0}{\longrightarrow}M$, we have a long exact sequence $$0=\mathbb{E}^j(C_P^{t+1},T^M_0)\longrightarrow\mathbb{E}^j(C_P^{t+1},M) \stackrel{\sim}{\longrightarrow} \mathbb{E}^{j+1}(C_P^{t+1},K_0^M)\longrightarrow\mathbb{E}^{j+1}(C_P^{t+1},T^M_0)=0$$ for any $t\geqslant0,j\geqslant1$ since $C_P^{t+1}\in{^{\bot}\mathscr{T}}$ and $T_0^M\in\mathscr{T}$. By applying the functor $\mathscr{C}(-,K_0^M)$ to the conflation $C_P^t\longrightarrow T^{t+1}_P\longrightarrow C_P^{t+1}$, we have a long exact sequence $$0=\mathbb{E}^j(T^{t+1}_P,K_0^M)\longrightarrow\mathbb{E}^j(C_P^t,K_0^M) \stackrel{\sim}{\longrightarrow} \mathbb{E}^{j+1}(C_P^{t+1},K_0^M)\longrightarrow\mathbb{E}^{j+1}(T^{t+1}_P,K_0^M)=0$$ for any $t\geqslant0,j\geqslant1$ since $K_0^M\in{\mathscr{T}^{\bot}}$ and $T^{t+1}_P\in\mathscr{T}$. We obtain that $\mathbb{E}^j(C_P^{t+1},M)\cong\mathbb{E}^{j+1}(C_P^{t+1},K_0^M)\cong\mathbb{E}^j(C_P^t,K_0^M)$. By induction, we have $\mathbb{E}(C^i_P,M)\cong\mathbb{E}(C^0_P,K^M_{i-1})=0$ for any $i\geqslant1$ since $K^M_{i-1}\in{_{\mathscr{T}}\mathcal{X}}$. Then we have $$\mathbb{E}^j(C^i_P,M)=\left\{\begin{array}{cc}
                                        \mathbb{E}^{j-i-1}(P,M)=0, & j\geqslant i+2 \\
                                        \mathbb{E}(C^{i-j+1}_P,M)=0, & j\leqslant i+1.
                                      \end{array}\right.$$
  for any $M\in{_{\mathscr{T}}\mathcal{X}},P\in\mathcal{P}(\mathscr{C}),i\geqslant0,j\geqslant1$. Since $\bar{\mathscr{S}}\cup\mathscr{T}\subseteq{^{\bot}{_{\mathscr{T}}\mathcal{X}}}$, we have ${_{\mathscr{T}}\mathcal{X}}\subseteq{(\bar{\mathscr{S}}\cup\mathscr{T})^{\bot}}= {\mathscr{S}^{\bot}}$.

  Conversely, let $M\in{\mathscr{S}^{\bot}}$. Since $\mathscr{C}$ has enough projectives, there is a conflation $N\longrightarrow Q\longrightarrow M$ for some $Q\in\mathcal{P}(\mathscr{C})$. Consider the conflation $Q\stackrel{d^0_Q}{\longrightarrow}T^0_Q\longrightarrow C^0_Q$, there is a commutative diagram of conflations
  $$\xymatrix{N \ar[r] \ar@{=}[d]& Q\ar[r]\ar[d]&M\ar[d]\\
  N\ar[r]&T^0_Q\ar[r]\ar[d]&E\ar[d]\\
  &C^0_Q\ar@{=}[r]&C^0_P.}$$
  Thus $E\in{\rm Defl}(\mathscr{T})$. Since $M\in{\mathscr{S}^{\bot}}$, the third column is a split conflation and $M\in{\rm Defl}(\mathscr{T})$. Then there is a deflation $g:T_1\longrightarrow M$ for some $T_1\in \mathscr{T}$. Consider the set of morphisms $\Lambda=\{f:T^f\longrightarrow M~|~T^f\in \mathscr{T} \}$. Take $T'=\bigoplus\limits_{f\in\Lambda}T^f\in\mathscr{T}$ and we have a morphism $G:T'\longrightarrow D$ naturally. Now since $g\in \Lambda$ and it clearly factors through the $G$, $G:T' \longrightarrow M$ is a deflation by Condition WIC. Therefore, we get a conflation $$M_1\longrightarrow T'\stackrel{G}{\longrightarrow} M$$ with $G$ a right $\mathscr{T}$-approximation of $M$. Since $M\in{\mathscr{S}^{\bot}}\subseteq{\mathscr{T}^{\bot}}$, also $M_1\in{\mathscr{T}^{\bot}}$ by Proposition \ref{2.13}. By applying the functor $\mathscr{C}(C_P^{t+1},-)$ to the conflation $M_1\longrightarrow T'\longrightarrow M$, we have a long exact sequence $$0=\mathbb{E}^j(C_P^{t+1},T')\longrightarrow\mathbb{E}^j(C_P^{t+1},M) \stackrel{\sim}{\longrightarrow} \mathbb{E}^{j+1}(C_P^{t+1},M_1)\longrightarrow\mathbb{E}^{j+1}(C_P^{t+1},T')=0$$ for any $P\in\mathcal{P}(\mathscr{C}), t\geqslant0,j\geqslant1$ since $C_P^{t+1}\in{^{\bot}\mathscr{T}}$ and $T'\in\mathscr{T}$. By applying the functor $\mathscr{C}(-,M_1)$ to the conflation $C_P^t\longrightarrow T^{t+1}_P\longrightarrow C_P^{t+1}$, we have a long exact sequence $$0=\mathbb{E}^j(T^{t+1}_P,M_1)\longrightarrow\mathbb{E}^j(C_P^t,M_1) \stackrel{\sim}{\longrightarrow} \mathbb{E}^{j+1}(C_P^{t+1},M_1)\longrightarrow\mathbb{E}^{j+1}(T^{t+1}_P,M_1)=0$$ for any $t\geqslant0,j\geqslant1$ since $M_1\in{\mathscr{T}^{\bot}}$ and $T^{t+1}_P\in\mathscr{T}$. We obtain that $\mathbb{E}^j(C_P^t,M_1)\cong\mathbb{E}^{j+1}(C_P^{t+1},M_1)\cong\mathbb{E}^j(C_P^{t+1},M)=0$ since $M\in{\mathscr{S}^{\bot}}$. Then $M_1\in{\mathscr{S}^{\bot}}$. Repeating the same argument for $M_1$ and so on, we obtain that $M\in{_{\mathscr{T}}\mathcal{X}}$. Hence, we complete the proof of the claim.

  Clearly, $\mathcal{I}(\mathscr{C})\subseteq{\mathscr{S}^{\bot}}={_{\mathscr{T}}\mathcal{X}}$ and the proof is finished.
\end{proof}

\begin{theorem}\label{3.6}
  The definitions from Definition \ref{3.1} to Definition \ref{3.4} are equivalent. In other words, we obtain the following diagram
  $$
 \xymatrix@R=3.5em{{\rm Wakamatsu}-{\rm tilting~subcategory}\ar@{<=>}[r]\ar@{<=>}[d]& \infty{\rm -tilting~subcategory}\ar@{<=>}[d]
 \\{\rm Wakamatsu-cotilting~subcategory}\ar@{<=>}[r]&\infty{\rm -cotilting~subcategory}.}
$$
\end{theorem}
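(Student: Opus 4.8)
The plan is to split each of the four definitions into an Ext-vanishing part and a covering part, to observe that all four Ext-vanishing parts coincide, that the covering parts pair up across the two columns of the diagram, and that the two surviving covering conditions are exchanged by Lemma~\ref{3.5}. Write $(\star)$ for the condition $\mathbb{E}^n(\mathscr{T},\mathscr{T})=0$ for all $n\geqslant 1$; this is literally (WT1) and (WC1), and I claim it is moreover equivalent to ($\infty$T1), and dually to ($\infty$C1). Granting the claim, the definitions read: Wakamatsu-tilting is $(\star)$ together with $\mathcal{P}(\mathscr{C})\subseteq\mathcal{X}_{\mathscr{T}}$; Wakamatsu-cotilting is $(\star)$ together with $\mathcal{I}(\mathscr{C})\subseteq{_{\mathscr{T}}\mathcal{X}}$; $\infty$-tilting is ($\infty$T1) together with ($\infty$T2)$=\big(\mathcal{I}(\mathscr{C})\subseteq{_{\mathscr{T}}\mathcal{X}}\big)$, hence $(\star)$ together with $\mathcal{I}(\mathscr{C})\subseteq{_{\mathscr{T}}\mathcal{X}}$, which is precisely Wakamatsu-cotilting; and dually $\infty$-cotilting coincides with Wakamatsu-tilting. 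Finally Lemma~\ref{3.5} gives, under $(\star)$, that $\mathcal{P}(\mathscr{C})\subseteq\mathcal{X}_{\mathscr{T}}$ if and only if $\mathcal{I}(\mathscr{C})\subseteq{_{\mathscr{T}}\mathcal{X}}$, so Wakamatsu-tilting $\Leftrightarrow$ Wakamatsu-cotilting, and the whole square of equivalences closes.

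It remains to establish $(\star)\Leftrightarrow$ ($\infty$T1) (the statement for ($\infty$C1) being strictly dual). I would first record two facts valid under $(\star)$. (a) $\mathscr{T}\subseteq{_{\mathscr{T}}\mathcal{X}}$: each $T\in\mathscr{T}$ lies in $\mathscr{T}^{\bot}$ by $(\star)$, and the trivial resolution $\cdots\to 0\to 0\to T\stackrel{\id}{\longrightarrow}T$, all of whose terms lie in $\mathscr{T}\cap\mathscr{T}^{\bot}$, exhibits $T$ as an object of ${_{\mathscr{T}}\mathcal{X}}$. (b) ${_{\mathscr{T}}\mathcal{X}}\subseteq\mathscr{T}^{\bot}$, immediately from the defining property of ${_{\mathscr{T}}\mathcal{X}}$. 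Now for $(\star)\Rightarrow$ ($\infty$T1): the inclusion $\mathscr{T}\subseteq{_{\mathscr{T}}\mathcal{X}}$ needed to even speak of an $\mathbb{E}$-projective generator is (a); the orthogonality $\mathbb{E}(\mathscr{T},{_{\mathscr{T}}\mathcal{X}})=0$ is (b); and for any $M\in{_{\mathscr{T}}\mathcal{X}}$ the first $\mathbb{E}$-triangle $K_0^M\longrightarrow T_0^M\stackrel{d_0^M}{\longrightarrow}M\dashrightarrow$ of its defining sequence is a conflation with $T_0^M\in\mathscr{T}$ and $K_0^M\in{_{\mathscr{T}}\mathcal{X}}$, the latter because the truncated sequence $\cdots\to T_2^M\to T_1^M\to K_0^M$ witnesses membership. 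For ($\infty$T1)$\Rightarrow(\star)$: being an $\mathbb{E}$-projective generator of ${_{\mathscr{T}}\mathcal{X}}$ includes $\mathscr{T}\subseteq{_{\mathscr{T}}\mathcal{X}}$, which with (b) yields $\mathscr{T}\subseteq\mathscr{T}^{\bot}$, i.e.\ $(\star)$.

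There is no serious obstacle here: the substantive input is Lemma~\ref{3.5}, already established, and the rest is definition-chasing. The point most worth stating with care is that the generator clause ($\infty$T1) — and dually the cogenerator clause ($\infty$C1) — carries no information beyond $(\star)$, precisely because ${_{\mathscr{T}}\mathcal{X}}$ (resp.\ $\mathcal{X}_{\mathscr{T}}$) is by construction resolved by $\mathscr{T}$ and contained in $\mathscr{T}^{\bot}$ (resp.\ in ${^{\bot}\mathscr{T}}$). One should also double-check that the dualizations of (a), (b) and of the argument above are applied consistently, so that $\infty$-cotilting really does collapse onto Wakamatsu-tilting in the claimed manner.
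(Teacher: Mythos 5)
Your proposal is correct and follows essentially the same route as the paper: the paper's proof consists precisely of the observation that $\mathbb{E}^n(\mathscr{T},\mathscr{T})=0$ for all $n\geqslant 1$ is equivalent to ($\infty$T1) and to ($\infty$C1) "by the definitions of ${_{\mathscr{T}}\mathcal{X}}$ and $\mathcal{X}_{\mathscr{T}}$", followed by an appeal to Lemma~\ref{3.5} to exchange the two covering conditions. Your write-up merely fills in the definition-chasing (the inclusions $\mathscr{T}\subseteq{_{\mathscr{T}}\mathcal{X}}\subseteq\mathscr{T}^{\bot}$ and the automatic generator conflation) that the paper leaves implicit, and does so correctly.
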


\begin{proof}
  By the definitions of ${_{\mathscr{T}}\mathcal{X}}$ and $\mathcal{X}_{\mathscr{T}}$, $\mathbb{E}^n(\mathscr{T},\mathscr{T})=0$ for any $n\geqslant1$ $\Longleftrightarrow\mathscr{T}$ is an $\mathbb{E}$-projective generator of ${_{\mathscr{T}}\mathcal{X}}\Longleftrightarrow\mathscr{T}$ is an $\mathbb{E}$-injective cogenerator of $\mathcal{X}_{\mathscr{T}}$. By Lemma \ref{3.5}, the proof is finished.
\end{proof}

In \cite{15}, Zhang shows that there is an one-to-one correspondence between $\infty$-tilting (resp. $\infty$-cotilting) subcategories and coresolving (resp. resolving) subcategories with an $\mathbb{E}$-projective generator (resp. $\mathbb{E}$-injective cogenerator), which satisfy some conditions. Now, we will obtain the following corollary.

\begin{corollary}
  {\rm(1)} There is an inverse bijection between classes of Wakamatsu-tilting subcategories $\mathscr{T}$ and coresolving subcategories $\mathscr{X}$ with an $\mathbb{E}$-projective generator, maximal among those with the same $\mathbb{E}$-projective generator, and the assignments are $\phi:\mathscr{T}\mapsto{_{\mathscr{T}}\mathcal{X}}$ and $\Phi:\mathscr{X}\mapsto{^{\bot}\mathscr{X}}\cap\mathscr{X}$.

  {\rm(2)} There is an inverse bijection between classes of Wakamatsu-tilting subcategories $\mathscr{T}$ and resolving subcategories $\mathscr{X}$ with an $\mathbb{E}$-injective cogenerator, maximal among those with the same $\mathbb{E}$-injective cogenerator, and the assignments are $\phi:\mathscr{T}\mapsto{\mathcal{X}_{\mathscr{T}}}$ and $\Phi:\mathscr{X}\mapsto{\mathscr{X}^{\bot}}\cap\mathscr{X}$.
\end{corollary}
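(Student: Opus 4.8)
The plan is to deduce this corollary directly from Theorem~\ref{3.6} together with the cited one-to-one correspondence of Zhang \cite[Section 3]{15} between $\infty$-tilting (resp. $\infty$-cotilting) subcategories and coresolving (resp. resolving) subcategories equipped with an $\mathbb{E}$-projective generator (resp. $\mathbb{E}$-injective cogenerator) that are maximal among those sharing the same generator (resp. cogenerator). By Theorem~\ref{3.6}, a subcategory $\mathscr{T}$ is Wakamatsu-tilting if and only if it is $\infty$-tilting if and only if it is $\infty$-cotilting, so the classes being parametrized on the left-hand side of both (1) and (2) are literally the same class. Hence each of (1) and (2) is nothing but a restatement of one half of Zhang's correspondence under the identification given by Theorem~\ref{3.6}; the only genuine task is to check that the assignments named here are the ones appearing in \cite{15}.

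For (1), I would argue as follows. Let $\mathscr{T}$ be Wakamatsu-tilting; viewing it as an $\infty$-tilting subcategory, Zhang's $\phi$ sends it to the coresolving subcategory ${_{\mathscr{T}}\mathcal{X}}$, which by $(\infty\mathrm{T}1)$ carries $\mathscr{T}$ as an $\mathbb{E}$-projective generator, and which is maximal among coresolving subcategories with that generator. Conversely, given such a maximal coresolving $\mathscr{X}$ with $\mathbb{E}$-projective generator, the correspondence recovers a subcategory which — because $\mathscr{T}$ is precisely the $\mathbb{E}$-projective generator sitting inside $\mathscr{X}$ — must coincide with $^{\bot}\mathscr{X}\cap\mathscr{X}$: indeed the generator consists of objects $X$ of $\mathscr{X}$ with $\mathbb{E}(X,\mathscr{X})=0$, i.e. lying in $^{\bot}\mathscr{X}$, and one checks using $\mathbb{E}^n(\mathscr{T},\mathscr{T})=0$ and the conflations defining ${_{\mathscr{T}}\mathcal{X}}$ that these are exactly $^{\bot}\mathscr{X}\cap\mathscr{X}$. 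This identifies $\Phi$ with Zhang's inverse map, so $\phi$ and $\Phi$ are mutually inverse bijections. Part (2) is obtained in exactly the same way, now running $\mathscr{T}$ through its incarnation as an $\infty$-cotilting subcategory, sending it to the resolving subcategory $\mathcal{X}_{\mathscr{T}}$ with $\mathbb{E}$-injective cogenerator $\mathscr{T}$, and recovering $\mathscr{T}$ from a maximal resolving $\mathscr{X}$ as $\mathscr{X}^{\bot}\cap\mathscr{X}$; this is the dual statement throughout, so no new argument is needed.

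The main obstacle — really the only point requiring care — is the bookkeeping verifying that the map $\Phi$ written here, namely $\mathscr{X}\mapsto{^{\bot}\mathscr{X}}\cap\mathscr{X}$ (resp. $\mathscr{X}\mapsto{\mathscr{X}^{\bot}}\cap\mathscr{X}$), agrees with the inverse assignment in \cite{15}, which a priori might be phrased as ``take the distinguished $\mathbb{E}$-projective generator of $\mathscr{X}$''. One must confirm that for a coresolving $\mathscr{X}$ that is maximal with $\mathbb{E}$-projective generator $\mathscr{T}$, the equality $\mathscr{T}={^{\bot}\mathscr{X}}\cap\mathscr{X}$ holds: the inclusion $\mathscr{T}\subseteq{^{\bot}\mathscr{X}}\cap\mathscr{X}$ is immediate from the generator axiom, while the reverse inclusion uses that $\mathscr{X}={_{\mathscr{T}}\mathcal{X}}$ (by the correspondence) and that an object of ${_{\mathscr{T}}\mathcal{X}}$ lying in $^{\bot}{_{\mathscr{T}}\mathcal{X}}$ must split off its approximating conflations and hence lie in $\mathscr{T}$, since $\mathscr{T}$ is closed under direct summands. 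Once this identification is in place, the corollary follows formally from Theorem~\ref{3.6}.
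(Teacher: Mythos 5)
Your proposal is correct and follows essentially the same route as the paper, which simply invokes Theorem 3.3 of \cite{15} together with Theorem \ref{3.6}; your extra bookkeeping identifying $\Phi$ with Zhang's inverse assignment is a reasonable elaboration of what the paper leaves implicit.
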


\begin{proof}
  The proof is clear by Theorem 3.3 in \cite{15} and Lemma \ref{3.6}.
\end{proof}
\section{Recollement}

In \cite{16}, Wang, Wei and Zhang defined the recollement $(\mathcal{A},\mathcal{B},\mathcal{C})$ of extriangulated categories. In this section, we will glue together Wakamatsu-tilting subcategories in $\mathcal{A}$ and $\mathcal{C}$, to obtain a Wakamatsu-tilting subcategory of $\mathcal{B}$ in a special recollement. Conversely, we will give a class of specail Wakamatsu-tilting subcategories, which can induce the Wakamatsu-tilting subcategories in $\mathcal{A}$ and $\mathcal{C}$ by the involved functors in the gluing. Throughout the section, we assume $\mathcal{A},\mathcal{B}$ and $\mathcal{C}$ are extriangulated categories, which have enough projectives and injectives and satisfy (WIC) conditions. We begin the section with the definition of recollement of extriangulated categories.

\begin{definition}
  \cite[Definition 3.1]{16} A {\em recollement} of $\mathcal{B}$ relative to $\mathcal{A}$ and $\mathcal{C}$, denoted by $(\mathcal{A},\mathcal{B},\mathcal{C})$, is a diagram
  $$\xymatrix{\mathcal{A}\ar[rr]|{i_{\ast}}&&
  \mathcal{B}\ar@/_1pc/[ll]|{i^{\ast}}\ar@/^1pc/[ll]|{i^!}\ar[rr]|{j^{\ast}}&&
  \mathcal{C}\ar@/_1pc/[ll]|{j_!}\ar@/^1pc/[ll]|{j_{\ast}}
  }$$
  given by two exact functors $i_{\ast},j^{\ast}$,  two right exact functors $i^{\ast},j_!$ and two left exact functors $i^!,j_{\ast}$, which satisfies the following conditions:

  (R1) $(i^{\ast},i_{\ast},i^!)$ and $(j_!,j^{\ast},j_{\ast})$ are adjoint triples,

  (R2) ${\rm Im}i_{\ast}={\rm Ker}j^{\ast}$,

  (R3) $i_{\ast},j_!$ and $j_{\ast}$ are fully faithful,

  (R4) For each $X\in\mathcal{B}$,  there exists a left exact $\mathbb{E}_{\mathcal{B}}$-triangle sequence
  $$i_{\ast}i^!X\stackrel{\theta_X}{\longrightarrow}X\stackrel{\vartheta_X}
  {\longrightarrow}j_{\ast}j^{\ast}X\longrightarrow i_{\ast}A$$
  \quad\quad\,\,\,\;\; with $A\in\mathscr{A}$, where $\theta_X$ and $\vartheta_X$ are given by the adjunction morphisms.

  (R5) For each $X\in\mathcal{B}$,  there exists a right exact $\mathbb{E}_{\mathcal{B}}$-triangle sequence
  $$i_{\ast}A'\longrightarrow j_!j^{\ast}X\stackrel{\upsilon_X}{\longrightarrow}
  X\stackrel{\nu_X}{\longrightarrow}i_{\ast}i^{\ast}X$$
  \quad\quad\,\,\,\;\; with $A'\in\mathscr{A}$, where $\upsilon_X$ and $\nu_X$ are given by the adjunction morphisms.
\end{definition}

Moreover, a recollement $(\mathcal{A},\mathcal{B},\mathcal{C})$ is called $\theta$-$\nu$-{\em compatible}, if the $\theta_X$ in (R4) and the $\nu_X$ in (R5)  are compatible. We will glue the Wakamatsu-tilting subcategories in such special recollements.

We collect some properties of recollements which are very useful later on.

\begin{lemma}\label{4.2}
  \cite[Lemma 3.3]{16} Let $(\mathcal{A},\mathcal{B},\mathcal{C})$ be a recollement of extriangulated categories.

  {\rm(1)} All the natural transformations
  $$i^{\ast}i_{\ast}\Rightarrow {\rm Id}_{\mathcal{A}},\quad\;
  {\rm Id}_{\mathcal{A}}\Rightarrow i^!i_{\ast},\quad\;
  {\rm Id}_{\mathcal{C}}\Rightarrow j^{\ast}j_!,\quad\;
  j^{\ast}j_{\ast}\Rightarrow {\rm Id}_{\mathcal{C}}$$
  \quad\quad\; are natural isomorphisms.

  {\rm(2)} $i^{\ast}j_!=0$ and $i^!j_{\ast}=0$.

  {\rm(3)} $i^{\ast}$ preserves projective objects and $i^!$ preserves injective objects.

  {\rm(3$'$)} $j_!$ preserves projective objects and $j_{\ast}$ preserves injective objects.

  {\rm(4)} If $i^!$ (resp. $j_{\ast}$) is exact, then $i_{\ast}$ (resp. $j^{\ast}$) preserves projective objects.

  {\rm(4$'$)} If $i^{\ast}$ (resp. $j_!$) is exact, then $i_{\ast}$ (resp. $j^{\ast}$) preserves injective objects.

  {\rm(5)} If $i^{\ast}$ is exact, then $j_!$ is exact.

  {\rm(5$'$)} If $i^!$ is exact, then $j_{\ast}$ is exact.
\end{lemma}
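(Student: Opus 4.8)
The plan is to establish the six items in order of increasing difficulty, using only formal consequences of the adjunctions in (R1) together with (R2)--(R5), the exactness of $i_{*}$ and $j^{*}$, and the extra hypotheses appearing in (5), (5$'$). \emph{Items (1) and (2) are purely formal.} For (1) I would invoke the standard fact that in an adjunction the right adjoint is fully faithful iff the counit is a natural isomorphism and the left adjoint is fully faithful iff the unit is a natural isomorphism; applying this to the four adjoint pairs sitting inside the adjoint triples of (R1), together with (R3), yields all four isomorphisms at once. For (2), fix $C\in\mathcal{C}$; for every $A\in\mathcal{A}$ the adjunctions give $\mathcal{A}(i^{*}j_{!}C,A)\cong\mathcal{B}(j_{!}C,i_{*}A)\cong\mathcal{C}(C,j^{*}i_{*}A)$, and $j^{*}i_{*}A=0$ by (R2), so $\mathcal{A}(i^{*}j_{!}C,-)$ is the zero functor; taking $A=i^{*}j_{!}C$ forces $i^{*}j_{!}C=0$. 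The equality $i^{!}j_{*}=0$ is dual.

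\emph{Items (3), (3$'$), (4), (4$'$).} The single underlying principle is: a left adjoint of an exact functor preserves projectives, and a right adjoint of an exact functor preserves injectives. This is a one-line check: an exact functor sends deflations to deflations (resp. inflations to inflations), so surjectivity of $\mathcal{B}(P,-)$ on a deflation, transported along the adjunction isomorphism, yields the corresponding surjectivity for the adjoint of $P$; dually for injectives. Since $i_{*}$ and $j^{*}$ are exact, applying this to the pairs $(i^{*},i_{*})$ and $(i_{*},i^{!})$ gives (3), and to $(j_{!},j^{*})$ and $(j^{*},j_{*})$ gives (3$'$). For (4): if $i^{!}$ is exact then $i_{*}$, being its left adjoint, preserves projectives, and if $j_{*}$ is exact then $j^{*}$, being its left adjoint, preserves projectives. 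For (4$'$): if $i^{*}$ is exact then $i_{*}$, being its right adjoint, preserves injectives, and if $j_{!}$ is exact then $j^{*}$, being its right adjoint, preserves injectives.

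\emph{Items (5) and (5$'$): the main obstacle.} Here the extriangulated structure, (R5) and Condition \ref{2.6} are genuinely needed. Since $j_{!}$ is already right exact, it suffices to prove it sends inflations to inflations; by Lemma \ref{2.8} this turns the right exact sequence $j_{!}A\to j_{!}B\to j_{!}C$ attached to a conflation $A\stackrel{x}{\longrightarrow}B\stackrel{y}{\longrightarrow}C\dashrightarrow$ in $\mathcal{C}$ into a conflation. Right exactness of $j_{!}$ furnishes an $\mathbb{E}_{\mathcal{B}}$-triangle $A''\to j_{!}B\to j_{!}C\dashrightarrow$ and a compatible deflation $g\colon j_{!}A\to A''$ with $j_{!}x=(A''\to j_{!}B)\circ g$. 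Two reductions finish the argument. First, applying the exact functor $i^{*}$ and using $i^{*}j_{!}=0$ from (2) collapses this $\mathbb{E}$-triangle to $i^{*}A''\to 0\to 0\dashrightarrow$, so $i^{*}A''=0$. Second, I would record the auxiliary claim that, when $i^{*}$ is exact, every $Z\in\mathcal{B}$ with $i^{*}Z=0$ has $\upsilon_{Z}\colon j_{!}j^{*}Z\to Z$ an isomorphism: feed $Z$ into the right exact sequence of (R5), apply the exact $j^{*}$ and (1) to see the ``error term'' lies in $\mathrm{Ker}\,j^{*}=\mathrm{Im}\,i_{*}$, then apply $i^{*}$ once more to see that it vanishes. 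Meanwhile, applying $j^{*}$ to the $\mathbb{E}$-triangle, using $j^{*}j_{!}\cong\mathrm{Id}$, and using Condition \ref{2.6}(1) to see $j^{*}g$ is an inflation (hence, being also a compatible deflation, an isomorphism), gives $j^{*}A''\cong A$; together with $i^{*}A''=0$ and the auxiliary claim this yields $A''\cong j_{!}j^{*}A''\cong j_{!}A$, whence $j_{!}x$ is an inflation. Item (5$'$) is dual, via (R4), the exactness of $i_{*}$, and $i^{!}j_{*}=0$. The step I expect to cost the most care is the compatibility bookkeeping throughout (5)/(5$'$): checking that the morphisms which turn out to be simultaneously inflations and deflations are genuinely isomorphisms, which is precisely where Condition \ref{2.6} and the word ``compatible'' in the definition of a right exact functor are used.
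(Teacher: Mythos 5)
The paper does not prove this lemma at all: it is quoted verbatim from \cite[Lemma 3.3]{16}, so there is no in-paper argument to compare yours against. Judged on its own merits, your proof is essentially the standard one and is correct in outline. Items (1) and (2) are exactly the formal adjunction/(R2) arguments one expects, and the single principle you isolate for (3), (3$'$), (4), (4$'$) --- a left (resp.\ right) adjoint of a deflation-preserving (resp.\ inflation-preserving) functor preserves projectives (resp.\ injectives) --- cleanly dispatches all four items, since $i_{\ast}$ and $j^{\ast}$ are exact by definition of a recollement. Your reduction of (5) to showing that $j_{!}$ sends inflations to inflations, via Lemma \ref{2.8} and the comparison deflation $g\colon j_{!}A\to A''$ supplied by right exactness, is also the right strategy.

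Two steps in (5) are stated more briefly than they can be justified, though both are fixable. First, in your auxiliary claim, decompose the (R5) sequence for $Z$ into $\mathbb{E}$-triangles $i_{\ast}A'\to j_{!}j^{\ast}Z\xrightarrow{y_1}K\dashrightarrow$ and $K\xrightarrow{y_2}Z\to i_{\ast}i^{\ast}Z\dashrightarrow$. Applying $i^{\ast}$ to the first triangle gives a conflation $A'\to 0\to i^{\ast}K$, and in a general extriangulated category this does \emph{not} force $A'=0$ (in a triangulated category it only says $i^{\ast}K\cong A'[1]$). You must first observe that $i_{\ast}i^{\ast}Z=0$ makes $y_2$ an isomorphism, hence $i^{\ast}K\cong i^{\ast}Z=0$, and only then does the conflation $A'\to 0\to 0$ yield $A'=0$ and $y_1$ an isomorphism; your sketch omits this intermediate step. (Note also that this auxiliary claim is precisely Proposition \ref{4.3}(2) specialized to $i^{\ast}Z=0$, though reproving it from (R5) as you do avoids any worry about the logical order in \cite{16}.) Second, concluding that $j_{!}x$ is an inflation requires that $g$ \emph{itself} be invertible, not merely that $A''\cong j_{!}j^{\ast}A''\cong j_{!}A$ as objects; this follows from the naturality square $\upsilon_{A''}\circ j_{!}j^{\ast}g=g\circ\upsilon_{j_{!}A}$, in which the other three morphisms are isomorphisms (by the auxiliary claim, by $j^{\ast}g$ being an isomorphism, and by the triangle identity together with (1)). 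Finally, be aware that your argument shows $j_{!}$ sends conflations to conflations; upgrading this to ``$j_{!}$ is an exact functor'' in the technical sense of the definition quoted in the paper (preservation of compatible morphisms, of left exact sequences, and the natural transformation on $\mathbb{E}$-groups) takes a further routine but nonempty check.
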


\begin{proposition}\label{4.3}
  \cite[Proposition 3.4]{16} Let $(\mathcal{A},\mathcal{B},\mathcal{C})$ be a recollement of extriangulated categories.

  {\rm(1)} If $i^!$ is exact, then for each $X\in\mathcal{B}$,  there exists an $\mathbb{E}_{\mathcal{B}}$-triangle
  $$i_{\ast}i^!X\stackrel{\theta_X}{\longrightarrow}X\stackrel{\vartheta_X}
  {\longrightarrow}j_{\ast}j^{\ast}X\dashrightarrow$$
  \quad\quad\; where $\theta_X$ and $\vartheta_X$ are given by the adjunction
  morphisms.

  {\rm(2)} If $i^{\ast}$ is exact, then for each $X\in\mathcal{B}$,  there exists an $\mathbb{E}_{\mathcal{B}}$-triangle
  $$j_!j^{\ast}X\stackrel{\upsilon_X}{\longrightarrow}
  X\stackrel{\nu_X}{\longrightarrow}i_{\ast}i^{\ast}X\dashrightarrow$$
  \quad\quad\;where $\upsilon_X$ and $\nu_X$ are given by the adjunction morphisms.
\end{proposition}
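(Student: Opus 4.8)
The plan is to derive both parts from the four-term sequences furnished by (R4) and (R5), showing that the trailing $i_{\ast}$-term collapses once $i^!$ (resp. $i^{\ast}$) is assumed exact. For (1), (R4) gives a left exact $\mathbb{E}_{\mathcal{B}}$-triangle sequence $i_{\ast}i^!X\stackrel{\theta_X}{\longrightarrow}X\stackrel{\vartheta_X}{\longrightarrow}j_{\ast}j^{\ast}X\longrightarrow i_{\ast}A$ with $A\in\mathcal{A}$. Unwinding the definition of a left exact $4$-term sequence, I would fix $\mathbb{E}_{\mathcal{B}}$-triangles $i_{\ast}i^!X\stackrel{\theta_X}{\longrightarrow}X\stackrel{p}{\longrightarrow}K\dashrightarrow$ and $K\stackrel{q}{\longrightarrow}j_{\ast}j^{\ast}X\longrightarrow i_{\ast}A\dashrightarrow$ with $qp=\vartheta_X$. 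It then suffices to prove that $q$ is an isomorphism: replacing $K$ by $j_{\ast}j^{\ast}X$ in the first triangle produces precisely the asserted $\mathbb{E}_{\mathcal{B}}$-triangle $i_{\ast}i^!X\stackrel{\theta_X}{\longrightarrow}X\stackrel{\vartheta_X}{\longrightarrow}j_{\ast}j^{\ast}X\dashrightarrow$.

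To show $q$ is invertible I would apply the exact functor $i^!$. First I would record the general fact that an exact functor $F$ between extriangulated categories satisfying (WIC) sends $\mathbb{E}$-triangles to $\mathbb{E}$-triangles: for an $\mathbb{E}$-triangle $A\to B\to C\dashrightarrow$, the right exact clause produces an $\mathbb{E}$-triangle $A'\to FB\to FC\dashrightarrow$ together with a compatible deflation $FA\to A'$, the left exact clause makes $FA\to FB$ an inflation, and then (WIC) forces $FA\to A'$ to be an inflation as well, hence an isomorphism. Now applying $i^!$ to the first triangle, the fact that $\theta_X$ is the counit of the adjunction $(i_{\ast},i^!)$, together with the triangle identity and Lemma \ref{4.2}(1) (the unit ${\rm Id}_{\mathcal{A}}\Rightarrow i^!i_{\ast}$ is a natural isomorphism), shows that $i^!\theta_X$ is an isomorphism, so its cofibre $i^!K$ is $0$. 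Applying $i^!$ to the second triangle and using $i^!j_{\ast}=0$ (Lemma \ref{4.2}(2)), $i^!i_{\ast}\cong{\rm Id}$ and $i^!K=0$, we obtain an $\mathbb{E}_{\mathcal{A}}$-triangle $0\longrightarrow 0\longrightarrow A\dashrightarrow$, which forces $A\cong 0$. Hence $i_{\ast}A=0$ and $q$ is an isomorphism, proving (1).

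Part (2) is handled dually. Starting from the right exact $\mathbb{E}_{\mathcal{B}}$-triangle sequence $i_{\ast}A'\longrightarrow j_!j^{\ast}X\stackrel{\upsilon_X}{\longrightarrow}X\stackrel{\nu_X}{\longrightarrow}i_{\ast}i^{\ast}X$ of (R5), I would fix $\mathbb{E}_{\mathcal{B}}$-triangles $i_{\ast}A'\longrightarrow j_!j^{\ast}X\stackrel{p}{\longrightarrow}K\dashrightarrow$ and $K\stackrel{q}{\longrightarrow}X\stackrel{\nu_X}{\longrightarrow}i_{\ast}i^{\ast}X\dashrightarrow$ with $qp=\upsilon_X$, apply the exact functor $i^{\ast}$, and use $i^{\ast}j_!=0$ (Lemma \ref{4.2}(2)), $i^{\ast}i_{\ast}\cong{\rm Id}$ and the triangle identity for $(i^{\ast},i_{\ast})$ (so that $i^{\ast}\nu_X$ is an isomorphism) to conclude, in turn, that $i^{\ast}K\cong 0$ and then $A'\cong 0$. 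Then $p$ is an isomorphism and the second triangle becomes $j_!j^{\ast}X\stackrel{\upsilon_X}{\longrightarrow}X\stackrel{\nu_X}{\longrightarrow}i_{\ast}i^{\ast}X\dashrightarrow$, as desired.

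The step I expect to require the most care is this initial reduction, namely verifying that an exact functor genuinely transports an $\mathbb{E}$-triangle to an honest $\mathbb{E}$-triangle (and not merely to a left or right exact sequence) — this is exactly the place where (WIC) and the compatibility clause in the definition of exact functor are used. Everything after that is a short diagram chase, relying on the (co)unit identities of the adjunctions recorded in Lemma \ref{4.2}, together with the elementary observations that an $\mathbb{E}$-triangle one of whose outer maps is invertible has zero third term, and that an $\mathbb{E}$-triangle $0\to 0\to A\dashrightarrow$ forces $A\cong 0$ (since $\mathbb{E}_{\mathcal{A}}(A,0)=0$ it is split, and comparing realizations then yields an isomorphism $0\cong A$).
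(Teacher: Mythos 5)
Your argument is correct. Note that the paper itself offers no proof of this proposition: it is imported verbatim as \cite[Proposition 3.4]{16}, so there is nothing internal to compare against; your writeup is a self-contained reconstruction from (R4)/(R5) and Lemma \ref{4.2}, and it is the natural one. The structure is sound: decompose the four-term sequence of (R4) into the two $\mathbb{E}_{\mathcal{B}}$-triangles $i_{\ast}i^!X\xrightarrow{\theta_X}X\xrightarrow{p}K\dashrightarrow$ and $K\xrightarrow{q}j_{\ast}j^{\ast}X\to i_{\ast}A\dashrightarrow$, apply the exact functor $i^!$, use the triangle identity together with ${\rm Id}_{\mathcal{A}}\cong i^!i_{\ast}$ to see $i^!\theta_X$ is invertible (hence $i^!K\cong 0$), then use $i^!j_{\ast}=0$ to force $A\cong 0$, so $q$ is an isomorphism and the first triangle is the desired one; part (2) is genuinely dual. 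One small simplification: you do not need the WIC-based argument that an exact functor carries $\mathbb{E}$-triangles to $\mathbb{E}$-triangles --- the paper's Corollary 2.9 (Remark 2.11 of \cite{16}) states that a sequence is a conflation iff it is both left and right exact, and the image of a conflation under an exact functor is both, so preservation of conflations is immediate. The only point worth double-checking in your version is that the compatibility clause of (R4)/(R5) (which you quietly drop) is indeed not needed here; it isn't, since your vanishing arguments never invoke it, but it is the ingredient this paper's Lemma \ref{4.5} does rely on, so the two statements should not be conflated.
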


\begin{lemma}
  Let $(\mathcal{A},\mathcal{B},\mathcal{C})$ be a recollement of extriangulated categories and $n$ be any positive integer.

  {\rm(1)} $i^{\ast}$ is exact, then $\mathbb{E}^n_{\mathcal{A}}(i^{\ast}X,Y)\cong\mathbb{E}^n_{\mathcal{B}}(X,i_{\ast}Y)$ for any $X\in\mathcal{B}$ and $Y\in\mathcal{A}$.

  {\rm(2)} $i^!$ is exact, then $\mathbb{E}^n_{\mathcal{B}}(i_{\ast}X,Y)\cong\mathbb{E}^n_{\mathcal{B}}(X,i^!Y)$ for any $X\in\mathcal{A}$ and $Y\in\mathcal{B}$.

  {\rm(3)} $j_!$ is exact, then $\mathbb{E}^n_{\mathcal{B}}(j_!X,Y)\cong\mathbb{E}^n_{\mathcal{C}}(X,j^{\ast}Y)$ for any $X\in\mathcal{C}$ and $Y\in\mathcal{B}$.

  {\rm(4)} $j_{\ast}$ is exact, then $\mathbb{E}^n_{\mathcal{C}}(j^{\ast}X,Y)\cong\mathbb{E}^n_{\mathcal{B}}(X,j_{\ast}Y)$ for any $X\in\mathcal{B}$ and $Y\in\mathcal{C}$.
\end{lemma}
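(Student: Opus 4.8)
The plan is to treat the four statements by a single uniform argument, the only differences being which adjoint pair is invoked and whether the transported functor acts on the first or the second variable. In each part I would first settle the case $n=1$, and then pass to arbitrary $n\geq1$ by dimension shifting along (co)syzygies, using the long exact sequences of Proposition~\ref{2.13} together with the vanishing $\mathbb{E}^{k}(P,-)=0=\mathbb{E}^{k}(-,I)$ for $k\geq1$ whenever $P$ is projective and $I$ is injective.

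Take part (1): assume $i^{\ast}$ is exact and fix $X\in\mathcal{B}$, $Y\in\mathcal{A}$. Since $\mathcal{B}$ has enough projectives, choose an $\mathbb{E}_{\mathcal{B}}$-triangle $\Omega X\stackrel{x}{\longrightarrow}P\stackrel{y}{\longrightarrow}X\dashrightarrow$ with $P\in\mathcal{P}(\mathcal{B})$. Applying $\mathcal{B}(-,i_{\ast}Y)$ to it, Proposition~\ref{2.13} together with $\mathbb{E}_{\mathcal{B}}(P,i_{\ast}Y)=0$ exhibits $\mathbb{E}_{\mathcal{B}}(X,i_{\ast}Y)$ as the cokernel of $\mathcal{B}(x,i_{\ast}Y)\colon\mathcal{B}(P,i_{\ast}Y)\to\mathcal{B}(\Omega X,i_{\ast}Y)$. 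Because $i^{\ast}$ is exact it sends the triangle to an $\mathbb{E}_{\mathcal{A}}$-triangle $i^{\ast}\Omega X\stackrel{i^{\ast}x}{\longrightarrow}i^{\ast}P\stackrel{i^{\ast}y}{\longrightarrow}i^{\ast}X\dashrightarrow$ in which $i^{\ast}P\in\mathcal{P}(\mathcal{A})$ by Lemma~\ref{4.2}(3); the same computation in $\mathcal{A}$ exhibits $\mathbb{E}_{\mathcal{A}}(i^{\ast}X,Y)$ as the cokernel of $\mathcal{A}(i^{\ast}x,Y)$. The adjunction $(i^{\ast},i_{\ast})$ identifies $\mathcal{A}(i^{\ast}x,Y)$ with $\mathcal{B}(x,i_{\ast}Y)$, hence their cokernels, which proves the case $n=1$.

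For the inductive step, assume the isomorphism for $n$ and keep the triangle above: Proposition~\ref{2.13} with $\mathbb{E}^{k}_{\mathcal{B}}(P,-)=0$ for $k\geq1$ gives $\mathbb{E}^{n+1}_{\mathcal{B}}(X,i_{\ast}Y)\cong\mathbb{E}^{n}_{\mathcal{B}}(\Omega X,i_{\ast}Y)$, which is $\mathbb{E}^{n}_{\mathcal{A}}(i^{\ast}\Omega X,Y)$ by the hypothesis applied to $\Omega X$, and since $i^{\ast}P$ is projective the same shift in $\mathcal{A}$ turns this into $\mathbb{E}^{n+1}_{\mathcal{A}}(i^{\ast}X,Y)$. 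Part (3) is identical with $(j_{!},j^{\ast})$ replacing $(i^{\ast},i_{\ast})$, using that $j_{!}$ preserves projectives by Lemma~\ref{4.2}(3$'$). Part (4), whose right-hand side is $\mathbb{E}^{n}_{\mathcal{B}}(X,j_{\ast}Y)$ for $X\in\mathcal{B}$ and $Y\in\mathcal{C}$, again runs the same way starting from a projective resolution of $X$ in $\mathcal{B}$: here $j^{\ast}$ is exact, and since $j_{\ast}$ is assumed exact $j^{\ast}$ preserves projectives by Lemma~\ref{4.2}(4), while the Hom-level identification is the adjunction $(j^{\ast},j_{\ast})$. Part (2), whose right-hand side should read $\mathbb{E}^{n}_{\mathcal{A}}(X,i^{!}Y)$ for $X\in\mathcal{A}$ and $Y\in\mathcal{B}$, is the injective mirror image: one replaces the projective resolution of the left variable by an injective coresolution $Y\longrightarrow I\longrightarrow\Sigma Y\dashrightarrow$ of the right variable in $\mathcal{B}$, applies the exact functor $i^{!}$ (which preserves injectives by Lemma~\ref{4.2}(3)), uses the adjunction $(i_{\ast},i^{!})$ at the Hom level, and shifts dimension through cosyzygies.

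The only step I expect to require genuine care is the compatibility in the base case: one must check that the square relating $\mathcal{B}(x,i_{\ast}Y)$ to $\mathcal{A}(i^{\ast}x,Y)$ (and its analogues in the other three parts) actually commutes, i.e.\ that the adjunction isomorphism on Hom is natural in both variables and compatible with the connecting maps of Proposition~\ref{2.13}; granting this, passing to cokernels is legitimate and the induction proceeds mechanically. A minor secondary point, immediate from the definition of an exact functor together with Lemma~\ref{4.2}(3),(3$'$), is that an exact functor carries a projective (resp.\ injective) resolution to a resolution by projectives (resp.\ injectives), which is exactly what licenses the dimension shifts on the transported side.
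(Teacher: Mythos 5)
Your proof is correct and follows essentially the same route as the paper: a projective presentation of $X$, the fact that the exact adjoint preserves the conflation and projectivity, the adjunction isomorphism on Hom groups, and dimension shifting by induction (the paper phrases the base case via the five lemma rather than cokernels, which is the same computation). The paper only writes out part (1) and leaves the rest as "similar"; your treatment of the other parts, including the observation that the right-hand side of (2) should live in $\mathbb{E}^n_{\mathcal{A}}$, matches what that "similarly" must mean.
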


\begin{proof}
  We only prove (1), others can be proved similarly.

  For any $X\in\mathcal{B}$, there exists a conflation
  \begin{equation}\label{conf1}
    X'\longrightarrow P\longrightarrow X
  \end{equation} for some $P\in\mathcal{P}(\mathcal{B})$ and $X'\in\mathcal{B}$ since $\mathcal{B}$ has enough projectives. By applying $i^{\ast}$ to (\ref{conf1}), we obtain a conflation in $\mathcal{A}$
  \begin{equation}\label{conf2}
    i^{\ast}X'\longrightarrow i^{\ast}P\longrightarrow i^{\ast}X
  \end{equation}
  Since $i^{\ast}$ preserves projective objects by Lemma \ref{4.2}, we have $\mathbb{E}^n_{\mathcal{B}}(i^{\ast}P,Y)\cong \mathbb{E}^n_{\mathcal{A}}(P,i_{\ast}Y)=0$ for any $Y\in\mathcal{A}$ and $n\geqslant1$. By applying ${\mathcal{B}}(-,i_{\ast}Y)$ to (\ref{conf1}) and ${\mathcal{A}}(-,Y)$ to (\ref{conf2}), we obtain two commutative diagrams of exact sequences
  $$\xymatrix{\mathcal{B}(P,i_{\ast}Y)\ar[d]^{\sim}\ar[r]
  &\mathcal{B}(X',i_{\ast}Y)\ar[d]^{\sim}\ar[r]&\mathbb{E}_{\mathcal{B}}(X,i_{\ast}Y)\ar[d]\ar[r]&0\\
  \mathcal{A}(i^{\ast}P,Y)\ar[r]&\mathcal{A}(i^{\ast}X',Y)\ar[r]&\mathbb{E}_{\mathcal{A}}(i^{\ast}X,Y)\ar[r]&0\;;
  }$$
  $$\xymatrix{0\ar[r]&\mathbb{E}^i_{\mathcal{B}}(X',i_{\ast}Y)\ar[d]^{\sim}\ar[r]^{\sim} &\mathbb{E}^{i+1}_{\mathcal{B}}(X,i_{\ast}Y)\ar[d]\ar[r]&0\\
  0\ar[r]&\mathbb{E}^i_{\mathcal{A}}(i^{\ast}X',Y)\ar[r]^{\sim}&\mathbb{E}^{i+1}_{\mathcal{A}}(i^{\ast}X,Y)\ar[r]&0\;;
  }$$
  for any $i\geqslant1$. By five lemma, we obtain that $\mathbb{E}_{\mathcal{A}}(i^{\ast}X,Y)\cong\mathbb{E}_{\mathcal{B}}(X,i_{\ast}Y)$ for any $X\in\mathcal{B}$ and $Y\in\mathcal{A}$. By induction, we have $\mathbb{E}^n_{\mathcal{A}}(i^{\ast}X,Y)\cong\mathbb{E}^n_{\mathcal{B}}(X,i_{\ast}Y)$ and the proof is finished.
\end{proof}

The following Lemma is essential in the proof of the final result.

\begin{lemma}\label{4.5}
  Let $(\mathcal{A},\mathcal{B},\mathcal{C})$ be a $\theta$-$\nu$-compatible recollement of extriangulated categories.

  {\rm(1)} If $i^{\ast}$ is exact, then $i^!j_!=0$.

  {\rm(2)} If $i^!$ is exact, then $i^{\ast}j_{\ast}=0$.
\end{lemma}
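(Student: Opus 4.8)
The plan is to prove (1) and note that (2) follows dually. The key observation is that when $i^{\ast}$ is exact, Proposition \ref{4.3}(2) gives, for every $X \in \mathcal{B}$, an $\mathbb{E}_{\mathcal{B}}$-triangle $j_!j^{\ast}X \stackrel{\upsilon_X}{\longrightarrow} X \stackrel{\nu_X}{\longrightarrow} i_{\ast}i^{\ast}X \dashrightarrow$. First I would specialize this to $X = j_!C$ for an arbitrary $C \in \mathcal{C}$: by Lemma \ref{4.2}(1) we have $j^{\ast}j_! \cong \mathrm{Id}_{\mathcal{C}}$, and by Lemma \ref{4.2}(2) we have $i^{\ast}j_! = 0$, so the triangle collapses to $j_!C \stackrel{\upsilon_{j_!C}}{\longrightarrow} j_!C \longrightarrow i_{\ast}(i^{\ast}j_!C) = i_{\ast}0 = 0 \dashrightarrow$. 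Thus $\upsilon_{j_!C}$ is an isomorphism (its cone is zero), so $j_!C$ sits in a split triangle and in particular $j_!C$ is, up to isomorphism, in the image of $j_!$ compatibly with all the adjunction data.

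Next I would apply $i^!$ to the situation. The cleanest route: since $i^{\ast}$ is exact, Lemma \ref{4.2}(5) gives that $j_!$ is exact as well; hence $j_!$ sends the conflations resolving $C$ (from enough projectives/injectives in $\mathcal{C}$) to conflations in $\mathcal{B}$. Now I would run an induction on the ``cosyzygy length'' or simply use the higher-Ext isomorphism: for $Y \in \mathcal{B}$, $\mathbb{E}^n_{\mathcal{B}}(j_!C, Y) \cong \mathbb{E}^n_{\mathcal{C}}(C, j^{\ast}Y)$ by the preceding lemma (part (3), which applies since $j_!$ is exact). The point, though, is to show $i^!j_!C = 0$ as an \emph{object}, not just a vanishing of Ext. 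For that I would use the defining adjunction: $\mathcal{B}(i_{\ast}A, j_!C) \cong \mathcal{A}(A, i^!j_!C)$ for all $A \in \mathcal{A}$, together with the fact that $\mathcal{B}(i_{\ast}A, j_!C) = 0$ — this last vanishing follows because any morphism $i_{\ast}A \to j_!C$ must factor through the (split) triangle above, and applying $j^{\ast}$ kills $i_{\ast}A$ (by (R2), $i_{\ast}A \in \mathrm{Ker}\, j^{\ast}$) while $j^{\ast}j_!C \cong C$, forcing the morphism to be zero by a standard argument using $\upsilon_{j_!C}$ being an isomorphism. Hence $\mathcal{A}(A, i^!j_!C) = 0$ for all $A \in \mathcal{A}$, and since $\mathcal{A}$ is Krull-Schmidt this forces $i^!j_!C = 0$.

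Actually the slicker finish avoids Krull-Schmidt: I would instead observe that $i^!j_!C$ is computed from the left exact $\mathbb{E}_{\mathcal{B}}$-triangle sequence in (R4) applied to $X = j_!C$, namely $i_{\ast}i^!(j_!C) \stackrel{\theta}{\longrightarrow} j_!C \stackrel{\vartheta}{\longrightarrow} j_{\ast}j^{\ast}(j_!C) \longrightarrow i_{\ast}A$. Here $j^{\ast}j_!C \cong C$ by Lemma \ref{4.2}(1), so the middle term of the tail is $j_{\ast}C$. The $\theta$-$\nu$-compatibility hypothesis is exactly what lets me conclude that $\theta_{j_!C}$ is compatible, hence (being the first map of a left exact triangle with compatible connecting map) fits into an honest $\mathbb{E}_{\mathcal{B}}$-triangle $i_{\ast}i^!j_!C \to j_!C \to j_{\ast}C \dashrightarrow$ — and comparing with the split triangle from Proposition \ref{4.3}(2), whose first term $j_!C$ maps isomorphically, forces $i_{\ast}i^!j_!C = 0$, so $i^!j_!C = 0$ by faithfulness of $i_{\ast}$ (R3). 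Part (2) is proved by the same argument with the roles of $i^{\ast}, i^!$ and $j_!, j_{\ast}$ interchanged, using Proposition \ref{4.3}(1) and Lemma \ref{4.2}(5$'$).

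The main obstacle I anticipate is the bookkeeping around \emph{compatibility}: the maps $\theta_X$, $\upsilon_X$ live a priori only in left/right exact $\mathbb{E}$-triangle sequences, not in genuine $\mathbb{E}$-triangles, and upgrading them requires either exactness of $i^!$/$i^{\ast}$ or the $\theta$-$\nu$-compatibility assumption — getting the precise combination of hypotheses right (and checking that Lemma \ref{2.8} or its analogue applies to promote the relevant sequence to a conflation) is the delicate point, everything else being a diagram chase through Lemma \ref{4.2}.
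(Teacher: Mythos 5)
Your proposal assembles the right ingredients (apply the recollement data to $j_!C$, use $i^{\ast}j_!=0$ and $i^{\ast}i_{\ast}\cong\mathrm{Id}_{\mathcal{A}}$, and invoke the $\theta$-$\nu$-compatibility), but both of your proposed finishes contain genuine gaps. In the first finish, the claim $\mathcal{B}(i_{\ast}A,j_!C)=0$ is not justified: applying $j^{\ast}$ to a morphism $f\colon i_{\ast}A\to j_!C$ only yields $j^{\ast}f=0$, and $j^{\ast}$ is not faithful; worse, by adjunction $\mathcal{B}(i_{\ast}A,j_!C)\cong\mathcal{A}(A,i^!j_!C)$, so this vanishing is essentially equivalent to the statement being proved (and it genuinely fails for triangulated recollements, where $i^!j_!$ is the nontrivial gluing functor --- so some use of the extra hypotheses here is unavoidable). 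In the second finish, you assert that compatibility of $\theta_{j_!C}$ upgrades the left exact $\mathbb{E}_{\mathcal{B}}$-triangle sequence of (R4) to an honest $\mathbb{E}_{\mathcal{B}}$-triangle $i_{\ast}i^!j_!C\to j_!C\to j_{\ast}j^{\ast}j_!C\dashrightarrow$; that upgrade is Proposition \ref{4.3}(1) and requires $i^!$ to be exact, which is not a hypothesis of part (1). Moreover the subsequent ``comparison with the split triangle'' does not force the first term of a conflation with prescribed second and third terms to vanish.

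The step you are missing is to apply the exact functor $i^{\ast}$ itself to the sequence. By the definition of a left exact $\mathbb{E}$-triangle sequence, (R4) applied to $j_!C$ already provides a genuine conflation $i_{\ast}i^!j_!C\stackrel{\theta_{j_!C}}{\longrightarrow}j_!C\longrightarrow M$ (for some $M$, not necessarily $j_{\ast}j^{\ast}j_!C$), in which $\theta_{j_!C}$ is an inflation and, by the $\theta$-$\nu$-compatibility hypothesis, a compatible morphism. Applying $i^{\ast}$ gives a conflation $i^{\ast}i_{\ast}i^!j_!C\to i^{\ast}j_!C\to i^{\ast}M$ whose first term is isomorphic to $i^!j_!C$ and whose middle term is $0$ by Lemma \ref{4.2}; since exact functors preserve compatible morphisms, $i^{\ast}\theta_{j_!C}\colon i^!j_!C\to 0$ is a compatible morphism which is both an inflation and a deflation, hence an isomorphism, whence $i^!j_!C=0$. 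This is the paper's argument, and it is where the compatibility hypothesis actually does its work.
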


\begin{proof}
  We only prove (1), (2) can be proved similarly.

  For any $Y\in\mathcal{C}$, $j_!Y$ is in $\mathcal{B}$. Then there exists a left exact $\mathbb{E}_{\mathcal{B}}$-triangle sequence
  $$\xymatrix@R=0.5ex{i_{\ast}i^!j_!Y\ar[r]^{\;\;\;\theta_{j_!Y}}&j_!Y\ar[rr]^{\vartheta_{j_!Y}\;\;}\ar[rd]|{h_1}&&j_{\ast}j^{\ast}j_!Y\ar[r]&i_{\ast}A.\\
  &&M\ar[ru]|{h_2}}$$
  with $A\in\mathcal{A}$, where $\theta_{j_!Y}$ and $\vartheta_{j_!Y}$ are given by the adjunction morphisms. Clearly, $\theta_{j_!Y}$ is a compatible inflation. Since $i^{\ast}$ is exact, we obtain a  conflation
  $$\xymatrix{i^{\ast}i_{\ast}i^!j_!Y\ar[r]^{\;\;\;i^{\ast}\theta_{j_!Y}}&i^{\ast}j_!Y\ar[r]&i^{\ast}M.}$$
  By Lemma \ref{4.2}, $i^{\ast}i_{\ast}i^!j_!Y\cong i^!j_!Y$ and $i^{\ast}j_!Y=0$. Since $i^{\ast}\theta_{j_!Y}$ is compatible and both an inflation and a deflation, $i^!j_!Y\cong 0$ for any $Y\in\mathcal{C}$. The proof is finished.
\end{proof}

Now, we will glue together Wakamatsu-tilting subcategories in $\mathcal{A}$ and $\mathcal{C}$.

\begin{theorem}\label{4.6}
   Let $(\mathcal{A},\mathcal{B},\mathcal{C})$ be a $\theta$-$\nu$-compatible recollement of extriangulated categories, $\mathcal{X}'$ and $\mathcal{X}''$ are Wakamatsu-tilting subcategories of $\mathcal{A}'$ and $\mathcal{C}$. Define
   $$\mathcal{X}=\{X\in\mathcal{B}~|~i^!X\in\mathcal{A},j^{\ast}X\in\mathcal{C}\}.$$
   If $i^!$ and $i^{\ast}$ are exact, then $\mathcal{X}$ is a Wakamatsu-tilting subcategory of $\mathcal{B}$.
\end{theorem}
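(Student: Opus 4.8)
The plan is to verify directly the two conditions (WT1) and (WT2) of Definition~\ref{3.1} for $\mathcal{X}$. First note that here \emph{all six} functors of the recollement are exact: $i_{\ast},j^{\ast}$ are exact by the definition of a recollement, $i^{\ast},i^!$ are exact by hypothesis, and hence $j_!,j_{\ast}$ are exact by Lemma~\ref{4.2}(5),(5$'$); in particular all the adjunction isomorphisms for higher extensions $\mathbb{E}^n_{\mathcal{A}}(i^{\ast}X,Y)\cong\mathbb{E}^n_{\mathcal{B}}(X,i_{\ast}Y)$, $\mathbb{E}^n_{\mathcal{B}}(i_{\ast}X,Y)\cong\mathbb{E}^n_{\mathcal{A}}(X,i^!Y)$, $\mathbb{E}^n_{\mathcal{B}}(j_!X,Y)\cong\mathbb{E}^n_{\mathcal{C}}(X,j^{\ast}Y)$, $\mathbb{E}^n_{\mathcal{C}}(j^{\ast}X,Y)\cong\mathbb{E}^n_{\mathcal{B}}(X,j_{\ast}Y)$ are available. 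Also $\mathcal{X}$ is closed under direct summands, since $i^!,j^{\ast}$ are additive and $\mathcal{X}',\mathcal{X}''$ are. A useful preliminary reduction is the identity $i^!\cong i^{\ast}$: applying the exact functor $i^!$ to the $\mathbb{E}_{\mathcal{B}}$-triangle $j_!j^{\ast}X\to X\to i_{\ast}i^{\ast}X\dashrightarrow$ of Proposition~\ref{4.3}(2) and using $i^!j_!=0$ (Lemma~\ref{4.5}(1)) and $i^!i_{\ast}\cong\mathrm{Id}_{\mathcal{A}}$ (Lemma~\ref{4.2}(1)), one obtains a conflation $0\to i^!X\to i^{\ast}X$ in $\mathcal{A}$, whose second morphism is therefore an isomorphism. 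Consequently $X\in\mathcal{X}$ if and only if $i^{\ast}X\in\mathcal{X}'$ and $j^{\ast}X\in\mathcal{X}''$. I will use freely the vanishings $i^{\ast}i_{\ast}\cong i^!i_{\ast}\cong\mathrm{Id}$, $j^{\ast}j_{\ast}\cong j^{\ast}j_!\cong\mathrm{Id}$, $j^{\ast}i_{\ast}=0$, $i^{\ast}j_!=i^!j_{\ast}=0$ (Lemma~\ref{4.2}) and $i^!j_!=i^{\ast}j_{\ast}=0$ (Lemma~\ref{4.5}).

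\emph{Condition (WT1).} Let $X,X'\in\mathcal{X}$ and $n\ge1$. Applying $\mathbb{E}^n_{\mathcal{B}}(X,-)$ to the $\mathbb{E}_{\mathcal{B}}$-triangle $i_{\ast}i^!X'\to X'\to j_{\ast}j^{\ast}X'\dashrightarrow$ of Proposition~\ref{4.3}(1) and invoking Proposition~\ref{2.13}, it suffices to show $\mathbb{E}^n_{\mathcal{B}}(X,i_{\ast}i^!X')=0$ and $\mathbb{E}^n_{\mathcal{B}}(X,j_{\ast}j^{\ast}X')=0$. For the first, apply $\mathbb{E}^n_{\mathcal{B}}(-,i_{\ast}i^!X')$ to $j_!j^{\ast}X\to X\to i_{\ast}i^{\ast}X\dashrightarrow$: the $j_!$-term is $\mathbb{E}^n_{\mathcal{B}}(j_!j^{\ast}X,i_{\ast}i^!X')\cong\mathbb{E}^n_{\mathcal{C}}(j^{\ast}X,j^{\ast}i_{\ast}i^!X')=0$ (since $j^{\ast}i_{\ast}=0$), and the $i_{\ast}$-term is $\mathbb{E}^n_{\mathcal{B}}(i_{\ast}i^{\ast}X,i_{\ast}i^!X')\cong\mathbb{E}^n_{\mathcal{A}}(i^{\ast}X,i^!i_{\ast}i^!X')\cong\mathbb{E}^n_{\mathcal{A}}(i^{\ast}X,i^!X')=0$ because $i^{\ast}X\cong i^!X\in\mathcal{X}'$, $i^!X'\in\mathcal{X}'$, and $\mathcal{X}'$ satisfies (WT1). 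Symmetrically, applying $\mathbb{E}^n_{\mathcal{B}}(-,j_{\ast}j^{\ast}X')$ to $j_!j^{\ast}X\to X\to i_{\ast}i^{\ast}X\dashrightarrow$, the $i_{\ast}$-term is $\mathbb{E}^n_{\mathcal{B}}(i_{\ast}i^{\ast}X,j_{\ast}j^{\ast}X')\cong\mathbb{E}^n_{\mathcal{A}}(i^{\ast}X,i^!j_{\ast}j^{\ast}X')=0$ (since $i^!j_{\ast}=0$), while the $j_!$-term is $\mathbb{E}^n_{\mathcal{B}}(j_!j^{\ast}X,j_{\ast}j^{\ast}X')\cong\mathbb{E}^n_{\mathcal{C}}(j^{\ast}X,j^{\ast}j_{\ast}j^{\ast}X')\cong\mathbb{E}^n_{\mathcal{C}}(j^{\ast}X,j^{\ast}X')=0$ since $j^{\ast}X,j^{\ast}X'\in\mathcal{X}''$ and $\mathcal{X}''$ satisfies (WT1). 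Hence $\mathbb{E}^n_{\mathcal{B}}(X,X')=0$.

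\emph{Condition (WT2).} Let $P\in\mathcal{P}(\mathcal{B})$. By Lemma~\ref{4.2}(3) we have $i^{\ast}P\in\mathcal{P}(\mathcal{A})$, and by Lemma~\ref{4.2}(5$'$),(4) (as $i^!$ is exact, so is $j_{\ast}$, hence $j^{\ast}$ preserves projectives) we have $j^{\ast}P\in\mathcal{P}(\mathcal{C})$; hence $i^{\ast}P\in\mathcal{X}_{\mathcal{X}'}$ and $j^{\ast}P\in\mathcal{X}_{\mathcal{X}''}$ by (WT2) for $\mathcal{X}'$, $\mathcal{X}''$. Choose $\mathbb{E}$-triangle coresolutions $i^{\ast}P\to U_0\to U_1\to\cdots$ in $\mathcal{A}$ and $j^{\ast}P\to V_0\to V_1\to\cdots$ in $\mathcal{C}$ with $U_k\in\mathcal{X}'$, $V_k\in\mathcal{X}''$ and all cosyzygies in ${}^{\bot}\mathcal{X}'$, resp.\ ${}^{\bot}\mathcal{X}''$; applying the exact functors $i_{\ast}$ and $j_{\ast}$ gives $\mathbb{E}_{\mathcal{B}}$-triangle coresolutions $i_{\ast}i^!P\cong i_{\ast}i^{\ast}P\to i_{\ast}U_0\to\cdots$ and $j_{\ast}j^{\ast}P\to j_{\ast}V_0\to\cdots$, with cosyzygies of the form $i_{\ast}C'$ ($C'\in{}^{\bot}\mathcal{X}'$), resp.\ $j_{\ast}C''$ ($C''\in{}^{\bot}\mathcal{X}''$). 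Now apply Lemma~\ref{2.12}(1) to the conflation $i_{\ast}i^!P\stackrel{\theta_P}{\longrightarrow}P\stackrel{\vartheta_P}{\longrightarrow}j_{\ast}j^{\ast}P$ (Proposition~\ref{4.3}(1)) together with these two coresolutions; its hypotheses hold because $\mathbb{E}^m_{\mathcal{B}}(j_{\ast}Z,i_{\ast}W)\cong\mathbb{E}^m_{\mathcal{A}}(i^{\ast}j_{\ast}Z,W)=0$ for all $Z\in\mathcal{C}$, $W\in\mathcal{A}$, $m\ge1$ (since $i^{\ast}j_{\ast}=0$). This yields an $\mathbb{E}_{\mathcal{B}}$-triangle sequence $P\to i_{\ast}U_0\oplus j_{\ast}V_0\to i_{\ast}U_1\oplus j_{\ast}V_1\to\cdots$ whose $k$-th term lies in $\mathcal{X}$, since $i^!(i_{\ast}U_k\oplus j_{\ast}V_k)\cong U_k\in\mathcal{X}'$ (using $i^!j_{\ast}=0$) and $j^{\ast}(i_{\ast}U_k\oplus j_{\ast}V_k)\cong V_k\in\mathcal{X}''$ (using $j^{\ast}i_{\ast}=0$), and whose $k$-th cosyzygy $K_k$ fits in a conflation $i_{\ast}C'\to K_k\to j_{\ast}C''$ with $C'\in{}^{\bot}\mathcal{X}'$, $C''\in{}^{\bot}\mathcal{X}''$. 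It remains to check $K_k\in{}^{\bot}\mathcal{X}$: for $X\in\mathcal{X}$, $m\ge1$ we have $\mathbb{E}^m_{\mathcal{B}}(i_{\ast}C',X)\cong\mathbb{E}^m_{\mathcal{A}}(C',i^!X)=0$ (as $C'\in{}^{\bot}\mathcal{X}'$, $i^!X\in\mathcal{X}'$), and $\mathbb{E}^m_{\mathcal{B}}(j_{\ast}C'',X)=0$: applying $\mathbb{E}^m_{\mathcal{B}}(j_{\ast}C'',-)$ to $i_{\ast}i^!X\to X\to j_{\ast}j^{\ast}X\dashrightarrow$, the $i_{\ast}$-term is $\mathbb{E}^m_{\mathcal{B}}(j_{\ast}C'',i_{\ast}i^!X)\cong\mathbb{E}^m_{\mathcal{A}}(i^{\ast}j_{\ast}C'',i^!X)=0$ and the $j_{\ast}$-term is $\mathbb{E}^m_{\mathcal{B}}(j_{\ast}C'',j_{\ast}j^{\ast}X)\cong\mathbb{E}^m_{\mathcal{C}}(j^{\ast}j_{\ast}C'',j^{\ast}X)\cong\mathbb{E}^m_{\mathcal{C}}(C'',j^{\ast}X)=0$ (as $C''\in{}^{\bot}\mathcal{X}''$, $j^{\ast}X\in\mathcal{X}''$); hence $\mathbb{E}^m_{\mathcal{B}}(K_k,X)=0$. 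Since also $P\in{}^{\bot}\mathcal{X}$ (because $\mathbb{E}^m_{\mathcal{B}}(P,-)=0$ for $m\ge1$), this exhibits $P\in\mathcal{X}_{\mathcal{X}}$, establishing (WT2).

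The main obstacle is the functorial bookkeeping: first pinning down the identification $i^!\cong i^{\ast}$, then organizing the repeated use of the four adjunction isomorphisms so that, combined with the vanishings $i^{\ast}j_!=i^!j_!=i^{\ast}j_{\ast}=i^!j_{\ast}=j^{\ast}i_{\ast}=0$, every auxiliary extension group that appears is forced to be zero; the delicate point is the verification of the hypothesis $\mathbb{E}_{\mathcal{B}}(j_{\ast}Z,i_{\ast}W)=0$ ($Z\in\mathcal{C}$, $W\in\mathcal{A}$) needed to apply the Horseshoe-type Lemma~\ref{2.12}(1) in (WT2). Once these are in hand, the gluing of the coresolutions and the checks that the glued terms and cosyzygies remain inside $\mathcal{X}$ and ${}^{\bot}\mathcal{X}$ are routine. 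Alternatively, by Theorem~\ref{3.6} and Lemma~\ref{3.5} one could instead verify the dual condition $\mathcal{I}(\mathcal{B})\subseteq{}_{\mathcal{X}}\mathcal{X}$ using Proposition~\ref{4.3}(2) and Lemma~\ref{2.12}(2); that argument is entirely symmetric.
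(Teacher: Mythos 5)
Your proof is correct and follows essentially the same strategy as the paper's: condition (WT1) is verified by decomposing both variables with the canonical conflations of Proposition~\ref{4.3} and the four $\mathbb{E}^n$-adjunction isomorphisms, and (WT2) by pushing the coresolutions of $i^{\ast}P$ and $j^{\ast}P$ into $\mathcal{B}$ and gluing them with the Horseshoe-type Lemma~\ref{2.12}(1). The one substantive divergence is in (WT2): the paper glues along $j_!j^{\ast}P\to P\to i_{\ast}i^{\ast}P$ using $j_!$ on the $\mathcal{C}$-coresolution, so the Horseshoe hypothesis becomes $\mathbb{E}_{\mathcal{B}}(i_{\ast}-,j_!-)=0$ (via $i^!j_!=0$) and both cosyzygy checks fall out immediately from the adjunctions $i_{\ast}\dashv i^!$ and $j_!\dashv j^{\ast}$, whereas you glue along $i_{\ast}i^!P\to P\to j_{\ast}j^{\ast}P$ using $j_{\ast}$, which forces the extra decomposition step in your verification that $\mathbb{E}^m_{\mathcal{B}}(j_{\ast}C'',X)=0$; both routes are valid, the paper's being marginally more economical. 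Your preliminary identification $i^!\cong i^{\ast}$ (from applying the exact $i^!$ to the second canonical conflation and using $i^!j_!=0$) is a correct observation that the paper does not make explicit, though it is not strictly needed.
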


\begin{proof}
  Firstly, we prove $\mathcal{X}$ is self-orthogonal. By Proposition \ref{4.3}, since $i^!$ is exact, there are two conflations
  $$i_{\ast}i^!M\stackrel{\theta_M}{\longrightarrow}M\stackrel{\vartheta_M}
  {\longrightarrow}j_{\ast}j^{\ast}M~{\rm and}~i_{\ast}i^!N\stackrel{\theta_N}{\longrightarrow}N\stackrel{\vartheta_N}
  {\longrightarrow}j_{\ast}j^{\ast}N$$ for any $M,N\in\mathcal{B}$. By applying $\mathcal{B}(-,N)$ to $i_{\ast}i^!M\stackrel{\theta_M}{\longrightarrow}M\stackrel{\vartheta_M}
  {\longrightarrow}j_{\ast}j^{\ast}M$ and $\mathcal{B}(j_{\ast}j^{\ast}M,-)$ to $i_{\ast}i^!N\stackrel{\theta_N}{\longrightarrow}N\stackrel{\vartheta_N}
  {\longrightarrow}j_{\ast}j^{\ast}N$, we obtain two long exact sequences
  $$\ldots\longrightarrow\mathbb{E}^n_{\mathcal{B}}(j_{\ast}j^{\ast}M,N) \longrightarrow \mathbb{E}^n_{\mathcal{B}}(M,N)\longrightarrow\mathbb{E}^n_{\mathcal{B}}(i_{\ast}i^!M,N)\longrightarrow\ldots\;;$$
  $$\ldots\longrightarrow\mathbb{E}^n_{\mathcal{B}}(j_{\ast}j^{\ast}M,i_{\ast}i^!N) \longrightarrow \mathbb{E}^n_{\mathcal{B}}(j_{\ast}j^{\ast}M,N)\longrightarrow\mathbb{E}^n_{\mathcal{B}}(j_{\ast}j^{\ast}M,j_{\ast}j^{\ast}N)\longrightarrow\ldots\;.$$ for any positive integer $n$.
  Since $i^!$ is exact, we obtain $$\mathbb{E}^n_{\mathcal{B}}(i_{\ast}i^!M,N)\cong\mathbb{E}^n_{\mathcal{A}}(i^!M,i^!N)=0.$$ Similarly, since $i^{\ast}$ is exact, we obtain $$\mathbb{E}^n_{\mathcal{B}}(j_{\ast}j^{\ast}M,i_{\ast}i^!N)\cong\mathbb{E}^n_{\mathcal{A}}(i^{\ast}j_{\ast}j^{\ast}M,i^!N)
  \cong\mathbb{E}^n_{\mathcal{A}}(0,i^!N)=0$$ by Lemma \ref{4.5}. By Lemma \ref{4.2}, we have $$\mathbb{E}^n_{\mathcal{B}}(j_{\ast}j^{\ast}M,j_{\ast}j^{\ast}N)\cong\mathbb{E}^n_{\mathcal{C}}(j^{\ast}j_{\ast}j^{\ast}M,j^{\ast}N) \cong \mathbb{E}^n_{\mathcal{C}}(j^{\ast}M,j^{\ast}N)=0$$ since $j_{\ast}$ is exact. Then we get $\mathbb{E}^n_{\mathcal{B}}(M,N)=0$ since $\mathbb{E}^n_{\mathcal{B}}(j_{\ast}j^{\ast}M,N)=0$.

  Next, we will show that $\mathcal{P}(\mathcal{B})\subseteq\mathcal{X}_{\mathcal{X}}$. Let $P\in\mathcal{P}(\mathcal{B})$, then $i^{\ast}P\in\mathcal{P}(\mathcal{A})$ and $j^{\ast}P\in\mathcal{P}(\mathcal{C})$. Hence, $i^{\ast}P\in\mathcal{X}_{\mathcal{X}'}$ and $j^{\ast}P\in\mathcal{X}_{\mathcal{X}''}$. We obtain two $\mathbb{E}$-triangle sequences in $\mathcal{A}$ and $\mathcal{C}$ $$\xymatrix@R=0.5ex{i^{\ast}P\ar[r]^{d_0'}&T_0'\ar[rr]^{d_1'}\ar[rd]&&T_1'\ar[rr]^{d_2'}\ar[rd]&&T_2'\ar[r]&\ldots\;;\\
  &&K_1'\ar[ru]&&K_2'\ar[ru]}
  $$$$\xymatrix@R=0.5ex{j^{\ast}P\ar[r]^{d_0''}&T_0''\ar[rr]^{d_1''}\ar[rd]&&T_1''\ar[rr]^{d_2''}\ar[rd]&&T_2''\ar[r]&\ldots\\
  &&K_1''\ar[ru]&&K_2''\ar[ru]}$$ with $T_i'\in\mathcal{X}'$, $T_i''\in\mathcal{X}''$, $K_j'\in{^{\bot}\mathcal{X}'}$ and $K_j''\in{^{\bot}\mathcal{X}''}$ for any $i\geqslant0, j\geqslant1$. Since $i_{\ast}$ and $j_!$ are exact by Lemma \ref{4.2}, we obtain two $\mathbb{E}$-triangle sequences in $\mathcal{B}$
  $$\xymatrix@R=0.5ex{i_{\ast}i^{\ast}P\ar[r]^{i_{\ast}d_0'}&i_{\ast}T_0'\ar[rr]^{i_{\ast}d_1'}\ar[rd]&&i_{\ast}T_1'\ar[rr]^{i_{\ast}d_2'}\ar[rd]&&i_{\ast}T_2'\ar[r]&\ldots\;;\\
  &&i_{\ast}K_1'\ar[ru]&&i_{\ast}K_2'\ar[ru]}
  $$$$\xymatrix@R=0.5ex{j_!j^{\ast}P\ar[r]^{j_!d_0''}&j_!T_0''\ar[rr]^{j_!d_1''}\ar[rd]&&j_!T_1''\ar[rr]^{j_!d_2''}\ar[rd]&&j_!T_2''\ar[r]&\ldots\;.\\
  &&j_!K_1''\ar[ru]&&j_!K_2''\ar[ru]}$$
  Since $i^{\ast}$ and $i^!$ is exact, we obtain that
  $$\mathbb{E}_{\mathcal{B}}(i_{\ast}i^{\ast}P,j_!T_0'')\cong\mathbb{E}_{\mathcal{A}}(i^{\ast}P,i^!j_!T_0'')\cong\mathbb{E}_{\mathcal{A}}(i^{\ast}P,0)=0\;;$$
  $$\mathbb{E}_{\mathcal{B}}(i_{\ast}K_i',j_!T_i'')\cong\mathbb{E}_{\mathcal{A}}(K_i',i^!j_!T_i'')\cong\mathbb{E}_{\mathcal{A}}(K_i',0)=0$$
  By Lemma \ref{2.12} and Lemma \ref{4.3}, there is a commutative diagram
  $$\xymatrix@C=1.5ex{j_!j^{\ast}P\ar[rr]\ar[d]_{j_!d_0''}&&P\ar[rr]\ar[d]&&i_{\ast}i^{\ast}P\ar[d]_{i_{\ast}d_0'}\\
  j_!T_0''\ar[rr]\ar[dd]_{j_!d_0''}\ar[rd]&&j_!T_0''\oplus i_{\ast}T_0'\ar[rr]\ar[dd]|!{[d]}\hole\ar[rd]&&i_{\ast}T_0'\ar[dd]_(0.3){i_{\ast}d_0'}|!{[d]}\hole\ar[rd]\\
  &j_!K_1''\ar[ld]\ar[rr]&&K_1\ar[ld]\ar[rr]&&i_{\ast}K_1'\ar[ld]\\
  j_!T_1''\ar[rr]\ar[d]_{j_!d_0''}&&j_!T_1''\oplus i_{\ast}T_1'\ar[rr]\ar[d]&&i_{\ast}T_1'\ar[d]_{i_{\ast}d_1'}\\
  \vdots&&\vdots&&\vdots}$$ where rows are conflations and columns are $\mathbb{E}$-triangle sequences. Clearly, $i^!{j_!T_n''\oplus i_{\ast}T_n'}
  \cong i^!i_{\ast}T_n'\cong T_n'\in\mathcal{X}'$ and $j^{\ast}{j_!T_n''\oplus i_{\ast}T_n'}\cong j^{\ast}j_!T_n''\cong T_n''\in\mathcal{X}''$ for any $n\geqslant0$, which induce that $j_!T_n''\oplus i_{\ast}T_n'\in\mathcal{X}$. It suffices to show that $K_n\in{^{\bot}\mathcal{X}}$ for any $n\geqslant1$. For any $N\in\mathcal{X}$, by applying ${\mathcal{B}}(-,N)$ to $j_!K_n''\longrightarrow K_n \longrightarrow i_{\ast}K_n'$, We obtain a long exact sequence in $\mathcal{B}$
  $$\ldots\longrightarrow\mathbb{E}^k_{\mathcal{B}}(i_{\ast}K_n',N) \longrightarrow \mathbb{E}^k_{\mathcal{B}}(K_n,N)\longrightarrow\mathbb{E}^k_{\mathcal{B}}(j_!K_n'',N)\longrightarrow\ldots$$
  for any $k\geqslant1$. Clearly, $\mathbb{E}^k_{\mathcal{B}}(i_{\ast}K_n',N)\cong \mathbb{E}^k_{\mathcal{A}}(K_n',i^!N)=0$ and $\mathbb{E}^k_{\mathcal{B}}(j_!K_n'',N)\cong\mathbb{E}^k_{\mathcal{C}}(K_n'',j^{\ast}N)=0$ since $i^!N\in\mathcal{X}'$ and $j^{\ast}N\in\mathcal{X}''$. Therefore, $K_n\in{^{\bot}\mathcal{X}}$ and the proof is finished.
\end{proof}

\begin{corollary}
  Let $(\mathcal{A},\mathcal{B},\mathcal{C})$ be a $\theta$-$\nu$-compatible recollement of extriangulated categories, $\mathcal{X}'$ and $\mathcal{X}''$ are Wakamatsu-cotilting subcategories of $\mathcal{A}'$ and $\mathcal{C}$. Define
  $$\mathcal{X}=\{X\in\mathcal{B}~|~i^!X\in\mathcal{A},j^{\ast}X\in\mathcal{C}\}.$$
  If $i^!$ and $i^{\ast}$ are exact, then $\mathcal{X}$ is a Wakamatsu-cotilting subcategory of $\mathcal{B}$.
\end{corollary}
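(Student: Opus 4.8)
The plan is to deduce this corollary directly from Theorem~\ref{4.6} together with the equivalence established in Theorem~\ref{3.6}, rather than rerunning the whole gluing argument. The key observation is that, under the standing assumptions fixed at the start of this section, each of $\mathcal{A}$, $\mathcal{B}$, $\mathcal{C}$ is an extriangulated category with enough projectives and injectives satisfying (WIC), so Theorem~\ref{3.6} is available verbatim inside every one of them: in each such category a subcategory is Wakamatsu-cotilting if and only if it is Wakamatsu-tilting.

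First I would apply Theorem~\ref{3.6} in $\mathcal{A}$ and in $\mathcal{C}$ to rephrase the hypothesis: $\mathcal{X}'$ and $\mathcal{X}''$ are Wakamatsu-cotilting subcategories of $\mathcal{A}$ and $\mathcal{C}$ precisely when $\mathcal{X}'$ and $\mathcal{X}''$ are Wakamatsu-tilting subcategories of $\mathcal{A}$ and $\mathcal{C}$. Next, since the recollement is $\theta$-$\nu$-compatible, $i^!$ and $i^{\ast}$ are exact, and the subcategory $\mathcal{X}=\{X\in\mathcal{B}\mid i^!X\in\mathcal{X}',\ j^{\ast}X\in\mathcal{X}''\}$ is defined by exactly the formula appearing in Theorem~\ref{4.6}, I would invoke that theorem to conclude that $\mathcal{X}$ is a Wakamatsu-tilting subcategory of $\mathcal{B}$. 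Finally, applying Theorem~\ref{3.6} once more, this time in $\mathcal{B}$, I obtain that $\mathcal{X}$ is a Wakamatsu-cotilting subcategory of $\mathcal{B}$, which is the assertion.

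I do not expect a genuine obstacle here: the only thing one must be slightly careful about is confirming that Theorem~\ref{3.6} legitimately applies in all three ambient categories, which is guaranteed by the blanket assumptions of Section~4. For completeness I would remark that a self-contained proof could instead be given by dualizing the proof of Theorem~\ref{4.6}: one replaces a projective $P\in\mathcal{P}(\mathcal{B})$ together with its $\mathcal{X}_{\mathcal{X}'}$- and $\mathcal{X}_{\mathcal{X}''}$-presentations by an injective $I\in\mathcal{I}(\mathcal{B})$ together with its ${_{\mathcal{X}'}\mathcal{X}}$- and ${_{\mathcal{X}''}\mathcal{X}}$-copresentations, uses that $i_{\ast}$ preserves injectives and $j_{\ast}$ is exact (Lemma~\ref{4.2}, available since $i^{\ast}$ and $i^!$ are exact), pushes these copresentations into $\mathcal{B}$ via $i_{\ast}$ and $j_{\ast}$, and then splices them together with the dual Horseshoe-type statement Lemma~\ref{2.12}(1); the self-orthogonality of $\mathcal{X}$ is verified by the same computation as in Theorem~\ref{4.6}. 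Since this merely dualizes material already at hand and is rendered redundant by Theorem~\ref{3.6}, I would present only the short argument.
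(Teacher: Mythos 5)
Your proposal is correct and is essentially the paper's own argument: the paper likewise dispatches this corollary by combining Theorem \ref{3.6} (the equivalence of Wakamatsu-tilting and Wakamatsu-cotilting, valid in each of $\mathcal{A}$, $\mathcal{B}$, $\mathcal{C}$ under the section's blanket hypotheses) with Theorem \ref{4.6}. Your added remark that one could instead dualize the proof of Theorem \ref{4.6} is a reasonable aside but, as you say, redundant given Theorem \ref{3.6}.
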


\begin{proof}
  The proof is clear by Lemma \ref{3.6} and Lemma \ref{4.6}.
\end{proof}

Now since we have glued the Wakamatsu-tilting subcategories, it is natural to consider whether the converse of the result is true. The following theorem shows that the converse of the gluing holds under certain conditions.

\begin{theorem}\label{4.8}
  Let $(\mathcal{A},\mathcal{B},\mathcal{C})$ be a recollement of extriangulated categories and $\mathcal{X}$ be a Wakamatsu-tilting subcategory of $\mathcal{B}$. If $i^!$ is exact, $i_{\ast}i^!({^{\bot}\mathcal{X}})\subseteq{^{\bot}\mathcal{X}}$ and $j_{\ast}j^{\ast}(\mathcal{X}^{\bot})\subseteq\mathcal{X}^{\bot}$, then $i^!\mathcal{X}$ and $j^{\ast}\mathcal{X}$ are Wakamatsu-tilting subcategories of $\mathcal{A}$ and $\mathcal{C}$, respectively.
\end{theorem}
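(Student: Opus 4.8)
The plan is to verify conditions (WT1) and (WT2) of Definition~\ref{3.1} for $i^!\mathcal{X}$ inside $\mathcal{A}$ and for $j^\ast\mathcal{X}$ inside $\mathcal{C}$. First I would assemble the standing facts: since $i^!$ is exact, $j_\ast$ is exact by Lemma~\ref{4.2}(5$'$), $i_\ast$ preserves projectives by Lemma~\ref{4.2}(4), $j_\ast$ preserves injectives by Lemma~\ref{4.2}(3$'$), and $i^!i_\ast\cong\mathrm{Id}_{\mathcal{A}}$, $j^\ast j_\ast\cong\mathrm{Id}_{\mathcal{C}}$ by Lemma~\ref{4.2}(1); the lemma preceding Lemma~\ref{4.5} then provides, for all $U,V\in\mathcal{B}$ and $n\geqslant1$, natural isomorphisms
\[
\mathbb{E}^n_{\mathcal{A}}(i^!U,i^!V)\cong\mathbb{E}^n_{\mathcal{B}}(i_\ast i^!U,V)\quad\text{and}\quad\mathbb{E}^n_{\mathcal{C}}(j^\ast U,j^\ast V)\cong\mathbb{E}^n_{\mathcal{B}}(U,j_\ast j^\ast V).
\]
Also, from $\mathbb{E}^n(\mathcal{X},\mathcal{X})=0$ we have $\mathcal{X}\subseteq{}^{\bot}\mathcal{X}\cap\mathcal{X}^{\bot}$.

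Condition (WT1) is then immediate: for $X,Y\in\mathcal{X}$ we have $X\in{}^{\bot}\mathcal{X}$, hence $i_\ast i^!X\in{}^{\bot}\mathcal{X}$ by hypothesis, so $\mathbb{E}^n_{\mathcal{A}}(i^!X,i^!Y)\cong\mathbb{E}^n_{\mathcal{B}}(i_\ast i^!X,Y)=0$; dually $Y\in\mathcal{X}^{\bot}$ gives $j_\ast j^\ast Y\in\mathcal{X}^{\bot}$, so $\mathbb{E}^n_{\mathcal{C}}(j^\ast X,j^\ast Y)\cong\mathbb{E}^n_{\mathcal{B}}(X,j_\ast j^\ast Y)=0$. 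I would also check (using additivity of $i^!,j^\ast$, the isomorphisms $i^!i_\ast\cong\mathrm{Id}$, $j^\ast j_\ast\cong\mathrm{Id}$ and closure of $\mathcal{X}$ under summands) that $i^!\mathcal{X}$ and $j^\ast\mathcal{X}$ are closed under direct summands, passing to their summand-closures if necessary.

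For (WT2) the two sides must be handled asymmetrically, and recognizing this is the real content. On the $\mathcal{A}$-side, take $P\in\mathcal{P}(\mathcal{A})$, so $i_\ast P\in\mathcal{P}(\mathcal{B})\subseteq\mathcal{X}_{\mathcal{X}}$; fix an $\mathbb{E}$-triangle sequence realizing $i_\ast P\in\mathcal{X}_{\mathcal{X}}$ with terms $T^i\in\mathcal{X}$ and cosyzygies $C^i\in{}^{\bot}\mathcal{X}$, and apply the exact functor $i^!$ (using $i^!i_\ast P\cong P$) to get an $\mathbb{E}$-triangle sequence $P\to i^!T^0\to i^!T^1\to\cdots$ with terms in $i^!\mathcal{X}$ and cosyzygies $i^!C^i$; since $P$ is projective and, for $X\in\mathcal{X}$, $\mathbb{E}^k_{\mathcal{A}}(i^!C^i,i^!X)\cong\mathbb{E}^k_{\mathcal{B}}(i_\ast i^!C^i,X)=0$ (because $C^i\in{}^{\bot}\mathcal{X}$ forces $i_\ast i^!C^i\in{}^{\bot}\mathcal{X}$ by hypothesis), this exhibits $P\in\mathcal{X}_{i^!\mathcal{X}}$, i.e.\ $\mathcal{P}(\mathcal{A})\subseteq\mathcal{X}_{i^!\mathcal{X}}$. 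On the $\mathcal{C}$-side the hypothesis available is $j_\ast j^\ast(\mathcal{X}^{\bot})\subseteq\mathcal{X}^{\bot}$, which is the ``wrong-handed'' orthogonality for arguing directly with $\mathcal{X}_{j^\ast\mathcal{X}}$; instead I would prove the equivalent condition $\mathcal{I}(\mathcal{C})\subseteq{}_{j^\ast\mathcal{X}}\mathcal{X}$ (equivalent to (WT2) by Lemma~\ref{3.5}, given the (WT1) just proved, or via Theorem~\ref{3.6}). For $I\in\mathcal{I}(\mathcal{C})$ we have $j_\ast I\in\mathcal{I}(\mathcal{B})\subseteq{}_{\mathcal{X}}\mathcal{X}$ (the latter because $\mathcal{X}$, being Wakamatsu-tilting, is Wakamatsu-cotilting by Theorem~\ref{3.6}); fixing an $\mathbb{E}$-triangle sequence realizing $j_\ast I\in{}_{\mathcal{X}}\mathcal{X}$ with terms $T_i\in\mathcal{X}$ and syzygies $K_i\in\mathcal{X}^{\bot}$ and applying the exact functor $j^\ast$ (using $j^\ast j_\ast I\cong I$) yields an $\mathbb{E}$-triangle sequence $\cdots\to j^\ast T_1\to j^\ast T_0\to I$ with terms in $j^\ast\mathcal{X}$ and syzygies $j^\ast K_i$; since $I$ is injective and, for $X\in\mathcal{X}$, $\mathbb{E}^k_{\mathcal{C}}(j^\ast X,j^\ast K_i)\cong\mathbb{E}^k_{\mathcal{B}}(X,j_\ast j^\ast K_i)=0$ (because $K_i\in\mathcal{X}^{\bot}$ forces $j_\ast j^\ast K_i\in\mathcal{X}^{\bot}$ by hypothesis), we get $I\in{}_{j^\ast\mathcal{X}}\mathcal{X}$. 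The main obstacle is precisely this bookkeeping: one must test the generating condition against $\mathcal{P}(\mathcal{A})$ for $i^!\mathcal{X}$ but against $\mathcal{I}(\mathcal{C})$ for $j^\ast\mathcal{X}$, so that the (co)syzygies produced lie in the orthogonal class for which an invariance hypothesis is supplied ($i_\ast i^!({}^{\bot}\mathcal{X})\subseteq{}^{\bot}\mathcal{X}$ versus $j_\ast j^\ast(\mathcal{X}^{\bot})\subseteq\mathcal{X}^{\bot}$); once the correct characterizations are chosen, everything collapses to the Ext-adjunction isomorphisms and the exactness of $i^!$, $j^\ast$ and $j_\ast$.
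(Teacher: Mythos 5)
Your proof is correct, and while it matches the paper on (WT1) and on the $\mathcal{A}$-half of (WT2), it takes a genuinely different route on the $\mathcal{C}$-half. For $\mathcal{P}(\mathcal{A})\subseteq\mathcal{X}_{i^!\mathcal{X}}$ both you and the paper push $P\in\mathcal{P}(\mathcal{A})$ into $\mathcal{B}$, coresolve $i_\ast P$ by $\mathcal{X}$, apply the exact functor $i^!$, and use $\mathbb{E}^n_{\mathcal{A}}(i^!K,i^!T)\cong\mathbb{E}^n_{\mathcal{B}}(i_\ast i^!K,T)$ together with $i_\ast i^!({}^{\bot}\mathcal{X})\subseteq{}^{\bot}\mathcal{X}$. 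For the $\mathcal{C}$-side, however, the paper stays with projectives: it takes $Q\in\mathcal{P}(\mathcal{C})$, coresolves $j_!Q\in\mathcal{P}(\mathcal{B})$ by $\mathcal{X}$ with cosyzygies $K''_j\in{}^{\bot}\mathcal{X}$, applies $j^{\ast}$, and then asserts $\mathbb{E}^n_{\mathcal{C}}(j^{\ast}K''_j,j^{\ast}N)\cong\mathbb{E}^n_{\mathcal{B}}(K''_j,j_{\ast}j^{\ast}N)=0$; note that this last vanishing pairs an object of ${}^{\bot}\mathcal{X}$ against an object of $\mathcal{X}^{\bot}$ and does not follow immediately from the stated hypotheses. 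You instead prove the equivalent condition $\mathcal{I}(\mathcal{C})\subseteq{}_{j^{\ast}\mathcal{X}}\mathcal{X}$ (legitimate by Lemma \ref{3.5} and Theorem \ref{3.6}, once self-orthogonality of $j^{\ast}\mathcal{X}$ is in hand), resolving $j_{\ast}I\in\mathcal{I}(\mathcal{B})$ by $\mathcal{X}$ so that the syzygies land in $\mathcal{X}^{\bot}$ --- exactly the class for which the hypothesis $j_{\ast}j^{\ast}(\mathcal{X}^{\bot})\subseteq\mathcal{X}^{\bot}$ is supplied. Your version therefore aligns each invariance hypothesis with the orthogonal class it actually controls, at the modest cost of invoking the tilting--cotilting equivalence; the paper's version is formally more symmetric but leaves the displayed vanishing on the $\mathcal{C}$-side to be justified. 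Both arguments otherwise rest on the same ingredients (Lemma \ref{4.2}, the $\mathbb{E}$-adjunction isomorphisms, and exactness of $i^!$, $j^{\ast}$, $j_{\ast}$); your remark about closing $i^!\mathcal{X}$ and $j^{\ast}\mathcal{X}$ under direct summands is a fair point that the paper also passes over.
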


\begin{proof}
  Firstly, we show that $i^!\mathcal{X}$ and $j^{\ast}\mathcal{X}$ are self-orthogonal. For any $M,N\in\mathcal{X}$, since $i^!$ and $j_{\ast}$ are exact, we have $\mathbb{E}^n_{\mathcal{A}}(i^!M,i^!N)\cong \mathbb{E}^n_{\mathcal{B}}(i_{\ast}i^!M,N)=0$ and
  $\mathbb{E}^n_{\mathcal{C}}(j^{\ast}M,j^{\ast}N)\cong\mathbb{E}^n_{\mathcal{B}}(M,j_{\ast}j^{\ast}N)=0$.

  It suffices to show that $\mathcal{P}(\mathcal{A})\subseteq\mathcal{X}_{i^!\mathcal{X}}$ and $\mathcal{P}(\mathcal{C})\subseteq\mathcal{X}_{j^{\ast}\mathcal{X}}$. For any $P\in\mathcal{P}(\mathcal{A})$ and $Q\in\mathcal{P}(\mathcal{C})$, $i_{\ast}P, j_!Q\in \mathcal{P}(\mathcal{B})$ by Lemma \ref{4.2}. We obtain two $\mathbb{E}$-triangle sequences in $\mathcal{B}$
  \begin{equation}\label{les1}
  \xymatrix@R=0.5ex{i_{\ast}P\ar[r]^{d_0'}&T_0'\ar[rr]^{d_1'}\ar[rd]&&T_1'\ar[rr]^{d_2'}\ar[rd]&&T_2'\ar[r]&\ldots\;;\\
  &&K_1'\ar[ru]&&K_2'\ar[ru]}
  \end{equation}
  \begin{equation}\label{les2}
  \xymatrix@R=0.5ex{j_!Q\ar[r]^{d_0''}&T_0''\ar[rr]^{d_1''}\ar[rd]&&T_1''\ar[rr]^{d_2''}\ar[rd]&&T_2''\ar[r]&\ldots\\
  &&K_1''\ar[ru]&&K_2''\ar[ru]}
  \end{equation} with $T_i', T_i''\in\mathcal{X}$ and $K_j', K_j''\in{^{\bot}\mathcal{X}''}$ for any $i\geqslant0, j\geqslant1$. By applying $i^!$ to (\ref{les1}) and $j^{\ast}$ to (\ref{les2}), we obtain two $\mathbb{E}$-triangle sequences in $\mathcal{A}$ and $\mathcal{C}$
  $$\xymatrix@R=0.5ex{i^!i_{\ast}P\ar[r]^{i^!d_0'}&i^!T_0'\ar[rr]^{i^!d_1'}\ar[rd]&&i^!T_1'\ar[rr]^{i^!d_2'}\ar[rd]&&i^!T_2'\ar[r]&\ldots\;;\\
  &&i^!K_1'\ar[ru]&&i^!K_2'\ar[ru]}$$
  $$\xymatrix@R=0.5ex{j^{\ast}j_!Q\ar[r]^{j^{\ast}d_0''}&j^{\ast}T_0''\ar[rr]^{j^{\ast}d_1''}\ar[rd]&&j^{\ast}T_1''\ar[rr]^{j^{\ast}d_2''}\ar[rd]&&j^{\ast}T_2''\ar[r]&\ldots\;.\\
  &&j^{\ast}K_1''\ar[ru]&&j^{\ast}K_2''\ar[ru]}$$ Clearly, $i^!T_i'\in i^!\mathcal{X}$ and $j^{\ast}T_i''\in j^{\ast}\mathcal{X}$ for any $i\geqslant0$. Since $i^!$ and $j_{\ast}$ are exact, we obtain that $\mathbb{E}^n_{\mathcal{A}}(i^!K_j',i^!N)\cong \mathbb{E}^n_{\mathcal{B}}(i_{\ast}i^!K_j',N)=0$ and
  $\mathbb{E}^n_{\mathcal{C}}(j^{\ast}K_j'',j^{\ast}N)\cong\mathbb{E}^n_{\mathcal{B}}(K_j'',j_{\ast}j^{\ast}N)=0$ for any $j\geqslant1$ and $N\in\mathcal{B}$. Therefore, the proof is finished.
\end{proof}

\begin{corollary}
  Let $(\mathcal{A},\mathcal{B},\mathcal{C})$ be a recollement of extriangulated categories and $\mathcal{X}$ be a Wakamatsu-cotilting subcategory of $\mathcal{B}$. If $i^!$ is exact, $i_{\ast}i^!({^{\bot}\mathcal{X}})\subseteq{^{\bot}\mathcal{X}}$ and $j_{\ast}j^{\ast}(\mathcal{X}^{\bot})\subseteq\mathcal{X}^{\bot}$, then $i^!\mathcal{X}$ and $j^{\ast}\mathcal{X}$ are Wakamatsu-cotilting subcategories of $\mathcal{A}$ and $\mathcal{C}$, respectively.
\end{corollary}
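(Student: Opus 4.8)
The plan is to deduce the corollary from Theorem~\ref{4.8} together with the duality packaged in Theorem~\ref{3.6}, exactly as the parallel Corollary after Theorem~\ref{4.6} is deduced from Lemma~\ref{4.6}. Since $\mathcal{X}$ is a Wakamatsu-cotilting subcategory of $\mathcal{B}$, condition (WC1) gives $\mathbb{E}^n_{\mathcal{B}}(\mathcal{X},\mathcal{X})=0$ for all $n\geqslant 1$, which is the same self-orthogonality condition (WT1); and by Lemma~\ref{3.5} (applied in $\mathcal{B}$), condition (WC2), namely $\mathcal{I}(\mathcal{B})\subseteq{_{\mathcal{X}}\mathcal{X}}$, is equivalent to $\mathcal{P}(\mathcal{B})\subseteq\mathcal{X}_{\mathcal{X}}$, which is (WT2). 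Hence $\mathcal{X}$ is simultaneously a Wakamatsu-tilting subcategory of $\mathcal{B}$, so Theorem~\ref{4.8} applies verbatim.

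The key steps, in order, are: (i) invoke Theorem~\ref{3.6} to replace ``Wakamatsu-cotilting subcategory of $\mathcal{B}$'' by ``Wakamatsu-tilting subcategory of $\mathcal{B}$'' without changing the object $\mathcal{X}$ (the two notions coincide, and in particular the subcategories ${^{\bot}\mathcal{X}}$ and $\mathcal{X}^{\bot}$ occurring in the hypotheses $i_{\ast}i^!({^{\bot}\mathcal{X}})\subseteq{^{\bot}\mathcal{X}}$ and $j_{\ast}j^{\ast}(\mathcal{X}^{\bot})\subseteq\mathcal{X}^{\bot}$ are literally unchanged); (ii) apply Theorem~\ref{4.8} to conclude that $i^!\mathcal{X}$ and $j^{\ast}\mathcal{X}$ are Wakamatsu-tilting subcategories of $\mathcal{A}$ and $\mathcal{C}$, respectively; (iii) apply Theorem~\ref{3.6} once more, now in the categories $\mathcal{A}$ and $\mathcal{C}$, to translate ``Wakamatsu-tilting'' back to ``Wakamatsu-cotilting''. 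One should also note in passing that $i^!\mathcal{X}$ and $j^{\ast}\mathcal{X}$ are closed under direct summands (needed for the notions to apply), which follows since $i^!$ and $j^{\ast}$ are additive, $\mathcal{X}$ is closed under summands, and $j^{\ast}j_{\ast}\cong\mathrm{Id}_{\mathcal{C}}$ while $i^!i_{\ast}\cong\mathrm{Id}_{\mathcal{A}}$ by Lemma~\ref{4.2}(1); alternatively this is already built into the statement of Theorem~\ref{4.8}.

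There is essentially no obstacle here: the corollary is a formal consequence of Theorem~\ref{4.8} and the coincidence Theorem~\ref{3.6}, in the same way the preceding Corollary follows from Lemma~\ref{4.6} and Lemma~\ref{3.6}. The only point requiring a line of care is that the hypotheses of Theorem~\ref{4.8} refer to $^{\bot}\mathcal{X}$ and $\mathcal{X}^{\bot}$, and one must observe that these are intrinsic to $\mathcal{X}$ as a subcategory (defined purely via vanishing of $\mathbb{E}^i$) and hence do not depend on whether we view $\mathcal{X}$ through the tilting or the cotilting description; thus the hypotheses transfer unchanged. Consequently the proof is a two-line citation: by Lemma~\ref{3.6} and Theorem~\ref{4.8}, $i^!\mathcal{X}$ and $j^{\ast}\mathcal{X}$ are Wakamatsu-cotilting subcategories of $\mathcal{A}$ and $\mathcal{C}$.
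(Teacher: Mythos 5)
Your proposal is correct and matches the paper's own proof, which is exactly the two-line citation you describe: the corollary follows from the equivalence of Wakamatsu-tilting and Wakamatsu-cotilting subcategories (Theorem \ref{3.6}) together with Theorem \ref{4.8}. Your additional remarks (that $^{\bot}\mathcal{X}$ and $\mathcal{X}^{\bot}$ are intrinsic to $\mathcal{X}$ and that the hypotheses transfer unchanged) are sound and merely make explicit what the paper leaves implicit.
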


\begin{proof}
  The proof is clear by Lemma \ref{3.6} and Theorem \ref{4.8}.
\end{proof}


\end{document}